\documentclass[11pt]{article}

\usepackage{amsmath, amssymb, amsthm}
\usepackage[utf8]{inputenc}
\usepackage[T1]{fontenc}
\usepackage{geometry}
\usepackage{hyperref}
\usepackage{mathtools}

\newtheorem{theorem}{Theorem}[section]
\newtheorem{lemma}[theorem]{Lemma}
\newtheorem{proposition}[theorem]{Proposition}
\newtheorem{corollary}[theorem]{Corollary}
\theoremstyle{definition}
\newtheorem{definition}[theorem]{Definition}
\theoremstyle{remark}
\newtheorem{remark}[theorem]{Remark}

\newcommand{\C}{\mathbb{C}}
\newcommand{\R}{\mathbb{R}}
\newcommand{\D}{\mathbb{D}}
\newcommand{\HH}{\mathbb{H}}
\newcommand{\Acal}{\mathcal{A}}
\newcommand{\Bcal}{\mathcal{B}}
\newcommand{\Fcal}{\mathcal{F}}
\newcommand{\Mcal}{\mathcal{M}}
\newcommand{\BV}{\mathcal{BV}}
\newcommand{\Gcal}{\mathcal{G}}
\newcommand{\Scal}{\mathcal{S}}

\title{The Absorption Theorem for the Beltrami--Vekua Normal Form}
\author{Daniel Alay\'on-Solarz\thanks{\texttt{danieldaniel@gmail.com}}}
\date{June 2026}

\begin{document}

\maketitle

\begin{abstract}
The Beltrami–Vekua normal form assigns to every smooth first-order real planar elliptic system a complex equation $w_{\bar z}-\mu w_z+\mathcal{A}w+\mathcal{B}\bar w=\mathcal{F}$ by an explicit pipeline. A companion paper showed that the density $\Theta=|\mathcal{B}|^2/(1-|\mu|^2)\,dx\,dy$ and its total mass are invariants under multiplicative gauges $w\mapsto\phi w$ and orientation-preserving diffeomorphisms. The real system carries a larger symmetry: its unknowns may be recombined by any pointwise invertible real-linear substitution $w=\varphi v'+\psi\bar v'$, the complex gauges being the case $\psi\equiv0$. We prove the absorption theorem: re-normalizing through the pipeline after any such substitution returns to the gauge orbit of the original equation, with a universal explicit gauge $\tilde\varphi=-i\lambda/(\varphi-\psi)$, where $\lambda$ is the spectral root of the structure polynomial. Hence every gauge–diffeomorphism invariant of the Beltrami–Vekua data — the density $\Theta$, the pseudo-analytic mass, the zero locus of $\mathcal{B}$ — is an invariant of the underlying real elliptic system under its full natural symmetry groupoid. The proof is structural: pairing the unknown with a $\lambda$-eigencovector of the symbol's canonical endomorphism gives an eigenpairing equation that is natural under substitutions and turns eigencovector rescalings into the gauge action; the pipeline normalization and the ambient form $w=u+iv$ are sections of one eigencovector line whose ratio is the gauge. The result extends to orientation-reversing substitutions, so the invariant theory survives recombination by $\mathrm{GL}(2,\mathbb{R})$-valued matrix fields.
\end{abstract}

\section{Introduction}\label{sec:intro}

The companion paper \cite{mass} attaches to every smooth first-order real planar elliptic system a \emph{Beltrami--Vekua} (BV) equation
\begin{equation}\label{eq:BV}
w_{\bar z} - \mu\, w_z + \Acal\, w + \Bcal\, \bar w = \Fcal, \qquad |\mu| < 1,
\end{equation}
through a seven-step pipeline that is algebraic apart from one differentiation of the principal-part coefficients, and proves that the 2-form
\begin{equation}\label{eq:Theta}
\Theta = \frac{|\Bcal|^2}{1 - |\mu|^2}\, dx\, dy
\end{equation}
is invariant under multiplicative gauges $w \mapsto \phi w$ and covariant under orientation-preserving diffeomorphisms, so that its total mass $\Mcal = \int_\Omega \Theta$, the \emph{pseudo-analytic mass}, is an invariant of the gauge--diffeomorphism class of the BV data, vanishing exactly on the analytic class $\Bcal \equiv 0$.

An invariant of \emph{what}, exactly? The BV data are produced from a real system in two unknowns $(u, v)$, and the real system carries a symmetry that the complex formalism does not display on its face: the unknowns may be recombined by an arbitrary smooth pointwise invertible real-linear substitution. In complex notation, writing $w = u + iv$, these are the maps
\begin{equation}\label{eq:gen-sub}
w \;=\; \varphi\, v' + \psi\, \bar v', \qquad |\varphi| > |\psi|,
\end{equation}
with $\varphi, \psi \in C^1(\Omega; \C)$; the inequality is pointwise invertibility with positive determinant, so \eqref{eq:gen-sub} is a $\mathrm{GL}^+(2,\R)$-valued substitution, and the complex gauges are exactly the $\C$-linear case $\psi \equiv 0$. Likewise, the two scalar equations may be recombined by an arbitrary invertible matrix of functions acting on the left. If the pseudo-analytic mass is to be an invariant of the \emph{system} rather than of its complex presentation, it must survive both operations.

This paper proves that it does, in the strongest available form. For a substitution $S = (\varphi, \psi)$ and BV data $D$, let $S^\sharp D$ denote the real system obtained by substituting \eqref{eq:gen-sub} into the equation of $D$ and separating real and imaginary parts, and let $D_S := \pi(S^\sharp D)$ be its pipeline normal form.

\begin{theorem}[Absorption Theorem]\label{thm:main}
For every $D \in \BV(\Omega)$ and every substitution $S = (\varphi, \psi)$ with $\varphi, \psi \in C^1(\Omega; \C)$ and $|\varphi| > |\psi|$ pointwise,
\begin{equation}\label{eq:universal-gauge}
D_S \;=\; \tilde\varphi \cdot D, \qquad \tilde\varphi \;=\; \frac{-i\lambda}{\varphi - \psi} \;\in\; C^1(\Omega; \C^*),
\end{equation}
where $\lambda = i(1+\mu)/(1-\mu) \in \HH^+$ is the spectral root of $D$ and $\tilde\varphi \cdot D$ denotes the gauge action \eqref{eq:gauge-action}. In particular $\mu_S = \mu$, $|\Bcal_S| = |\Bcal|$, and $\Theta_{D_S} = \Theta_D$ pointwise.
\end{theorem}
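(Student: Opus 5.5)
The plan is to follow the structural route indicated in the abstract: realize the pipeline's output as an \emph{eigenpairing equation} and show that substitutions act on it only by rescaling an eigencovector. Write the real system as $P\,\partial_x U + Q\,\partial_y U + \dots$ with $U=(u,v)^T$, and consider its canonical endomorphism $J=P^{-1}Q$ (or the normalized form the pipeline uses). Ellipticity gives $J$ a non-real eigenvalue. For the real system of a BV equation \eqref{eq:BV}, this eigenvalue, chosen in $\HH^+$, is exactly $\lambda=i(1+\mu)/(1-\mu)$. Let $\ell$ be a $\lambda$-eigencovector of the symbol, meaning a complex row vector with $\ell J=\lambda\ell$. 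Pairing the system with a left covector and combining with $\ell U$ yields a single complex equation for $W=\ell U$ of Beltrami--Vekua type. The first step is to check two facts about this equation. It depends only on the eigencovector line $[\ell]$ up to the gauge action: replacing $\ell$ by $c\ell$ with $c\in C^1(\Omega;\C^*)$ replaces $W$ by $cW$ and the data by $c\cdot D$. Moreover, the pipeline $\pi$ is precisely this construction with one specific, explicitly normalized section $\ell_\pi$ of that line.

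The second step is naturality under substitutions. A substitution $S$ acts on $U$ by a $\mathrm{GL}^+(2,\R)$ field $M$, with $U=MU'$. The substituted system then has symbol endomorphism conjugate to the original, $J'=M^{-1}JM$, up to left recombination of equations. Left recombination does not change the eigen-data; that it does not was presumably established with the pipeline. Conjugation leaves the spectrum unchanged, and because $\det M>0$ it also preserves the choice of root in $\HH^+$. This immediately gives $\lambda_S=\lambda$, hence $\mu_S=\mu$. Eigencovectors transform as $\ell'=\ell M$, so $\ell' U'=\ell U$. The eigenpairing equation for $S^\sharp D$ built with $\ell M$ is therefore literally the same complex equation as the one for $D$ built with $\ell$. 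The derivative of $M$ enters only the zeroth-order terms, and these are accounted for because the identity holds at the level of the equation itself rather than just of symbols.

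The third step identifies the gauge. The original BV data $D$ arise from the ambient section $\ell_0=(1,i)$, which gives $w=u+iv$. The pipeline applied to $S^\sharp D$ uses its own normalized section $\ell_\pi'$. Pulled back, this is a section $\ell_\pi' M^{-1}$ of the original line, so it equals $\tilde\varphi\,\ell_0$ for a scalar $\tilde\varphi$, and then $D_S=\tilde\varphi\cdot D$ by the first step. To compute $\tilde\varphi$, I would write $M$ in complex form from \eqref{eq:gen-sub}, $u+iv=\varphi v'+\psi\bar v'$. I would evaluate the pipeline's normalization of the $\lambda$-eigencovector of $J'$ explicitly. That normalization is something like "first component fixed" or "$-i\lambda$-weighted", whichever the pipeline specifies. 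Pairing the result against $\ell_0 M$, whose entries are $\varphi+\psi$ and $i(\varphi-\psi)$, should produce $-i\lambda/(\varphi-\psi)$.

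The consequences $|\Bcal_S|=|\Bcal|$ and $\Theta_{D_S}=\Theta_D$ then follow from the gauge-action formula: $\Bcal\mapsto \Bcal\,\tilde\varphi/\bar{\tilde\varphi}$ keeps its modulus, and $\mu$ is unchanged. I expect the main obstacle to be this last computation together with the claim that the pipeline equals the eigenpairing with a canonical section. I have to verify carefully that the pipeline's normalization steps, such as dividing by the leading coefficient and removing the $\bar w_z$ term, amount exactly to choosing $\ell_\pi$. I also have to confirm that $\tilde\varphi\ne0$, which holds because $|\varphi|>|\psi|$ and $\lambda\ne0$, and that $\tilde\varphi$ is $C^1$. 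I would first test both claims on the pure gauge case $\psi=0$, $\mu=0$, where $\lambda=i$ and $\tilde\varphi=1/\varphi$ must hold, as a consistency check.
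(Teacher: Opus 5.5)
Your architecture is the paper's: an eigenpairing equation attached to a $\lambda$-eigencovector, gauge covariance under $\ell\mapsto c\ell$, naturality $\ell\mapsto\ell M$ with $(\ell M)U'=\ell U$, the identification of $D$ with the ambient section $\ell_0=(1,i)$, and the gauge as the ratio of two sections of the one-dimensional eigencovector line. Your bookkeeping $\ell'_\pi=\tilde\varphi\,\ell_0M\Rightarrow D_S=\tilde\varphi\cdot D$ even has the right direction. The gap is the one step where the pipeline actually enters, and it is exactly the step that produces the formula. You do not show that $\pi$ is an eigenpairing equation, and you do not identify its section. Instead you obtain $-i\lambda/(\varphi-\psi)$ by fitting to the statement. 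Your candidate normalization ``first component fixed'' is in fact wrong.

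\textbf{The pipeline's section.} The canonical substitution $U=a_{22}u$, $V=v-a_{12}u$ followed by the spectral bundling $w=U+\lambda V$ gives $w=\ell_\pi\cdot(u,v)$ with $\ell_\pi=(a_{22}-\lambda a_{12},\,\lambda)$. This is a $\lambda$-eigencovector of $N=\sigma(e_1)^{-1}\sigma(e_2)$ because $\lambda^2+\beta\lambda+\alpha=0$. In particular, no $\bar w_z$ term ever needs to be removed.

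\textbf{Matching the residual.} Knowing the unknown is not enough. You must show that the output residual equals $(1-i\lambda)^{-1}\,\ell_\pi\cdot(V_x+NV_y+PV-h)$ identically in the first jet of $V$, not merely that the two equations have the same solutions. The paper gets this from three facts:
\begin{itemize}
\item the residual identity $\mathrm{Res}_{\pi(L)}(w)=\tfrac12(1-\mu)\bigl[-\lambda^2R_1+\lambda R_2\bigr]$, obtained by tracking residuals through the pipeline's steps;
\item $\ell_\pi\,\sigma(e_1)^{-1}=(-\lambda^2,\lambda)$;
\item $\tfrac12(1-\mu)=(1-i\lambda)^{-1}$.
\end{itemize}
It then closes with a determinacy lemma: two BV equations with the same $\mu$ whose residuals agree on $w=\omega V$ for all jets coincide, because $\omega$ and $\bar\omega$ are independent. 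That same fact is what makes your residual-level naturality argument rigorous.

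\textbf{Where the formula comes from.} The normalization that matters is $(\ell'_\pi)_2=\lambda$. This is the same function $\lambda$ for the substituted system, since $\lambda$ is invariant under substitution. The first component $a'_{22}-\lambda a'_{12}$, by contrast, depends on the substituted skeleton. Comparing with $(\ell_0M)_2=i(\varphi-\psi)$ gives $\tilde\varphi=\lambda/\bigl(i(\varphi-\psi)\bigr)=-i\lambda/(\varphi-\psi)$. The system-independence of $(\ell'_\pi)_2$ is exactly why the gauge is universal.

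Your $\mu=0$ test cannot detect this normalization, since $-i\lambda=1$ there. For $\psi=0$ and general $\mu$ the gauge is $-i\lambda/\varphi$, not $\varphi^{-1}$.

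\textbf{Two smaller points.}
\begin{itemize}
\item Conjugation $J\mapsto M^{-1}JM$ preserves eigenvalues for any invertible $M$, so $\det M>0$ is not what keeps the $\HH^+$ root; the argument goes through for $\det M<0$ as well. Its real role is that the substituted skeleton has $a_{11}>0$, so that $\pi$ is defined at all.
\item You assert $E_{L_0}(\ell_0)=D$ without the needed check. The realification's endomorphism is $[\lambda]$, so $\ell_0$ is indeed a $\lambda$-eigencovector. And $(1-i\lambda)^{-1}=\tfrac12(1-\mu)$ is precisely the coefficient of $w_x$ in $w_{\bar z}-\mu w_z$.
\end{itemize}
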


The substitution is thus not merely tolerated by the invariants: it is annihilated by the normal form itself, absorbed into a single multiplicative gauge whose closed form is universal --- it depends on the substitution and on the Beltrami coefficient of $D$, through $\lambda$, and on nothing else. Row recombinations are absorbed identically by the skeletonization step, and diffeomorphisms act covariantly as already proved in \cite{mass}; hence every gauge--diffeomorphism invariant of BV data is an invariant of the underlying smooth  real elliptic system under its full natural symmetry groupoid (Corollary~\ref{cor:invariants}). The theorem moreover extends, with the same gauge, to orientation-reversing substitutions $|\psi| > |\varphi|$ (Corollary~\ref{cor:reversal}): the entire invariant theory survives recombination by arbitrary $\mathrm{GL}(2,\R)$-valued matrix fields with nowhere-vanishing determinant.

The proof is short, and its shape is worth stating in words. The principal symbol of an elliptic system canonically defines an endomorphism field $N = \sigma(e_1)^{-1}\sigma(e_2)$ of the unknown bundle, whose characteristic polynomial is the structure polynomial of the pipeline; ellipticity makes its eigenvalue $\lambda \in \HH^+$ simple. Pairing the unknown vector with any $C^1$ left $\lambda$-eigencovector field $\omega$ of $N$ produces a complex unknown $w = \omega \cdot (u,v)$ which satisfies a BV equation with explicit data --- the \emph{eigenpairing equation} $E(\omega)$ (Lemma~\ref{lem:eigen-eq}). Three short lemmas then carry the entire argument: rescaling the eigencovector transforms $E(\omega)$ by exactly the gauge action (Lemma~\ref{lem:gauge-cov}); recombining the unknowns transports the eigencovector and leaves $E$ strictly invariant, $E(\omega S) = E(\omega)$, with the derivative contamination created by the substitution cancelling in one visible line (Lemma~\ref{lem:naturality}); and the pipeline computes $E(\omega_0)$ in its own normalization $\omega_0$ of the eigencovector, as a direct consequence of the residual identity of \cite{mass} (Proposition~\ref{prop:pipeline-id}). One last identification closes the loop: a BV equation is the eigenpairing equation of its own realification, in the ambient normalization $e = (1, i)$, $w = u + iv$ --- indeed the canonical endomorphism of the realification is nothing but multiplication by $\lambda$ (Lemma~\ref{lem:realification}). Absorption is then formal: the substitution transports the ambient eigencovector to $eS$, the pipeline selects its own eigencovector $\omega_0'$, the two are sections of the same eigencovector line because $\lambda$ is simple, and their ratio --- read off the second components --- is $i(\varphi - \psi)/\lambda$, whose reciprocal is the gauge \eqref{eq:universal-gauge}.

From this vantage the theorem says that the pipeline computes a frame-independent spectral object in a particular frame, and the gauge group of \cite{mass} appears a priori, as the normalization ambiguity of an eigencovector, rather than a posteriori as a transformation law. We return to this reading, and to the further structures it suggests, in Section~\ref{sec:concluding}.

The paper is organized as follows. Section~\ref{sec:setup} recalls the normal form, the residual identity --- the single result imported from \cite{mass} --- and the symmetry groupoid, and fixes the statement to be proved. Section~\ref{sec:eigenpairing} constructs the eigenpairing equation and proves the determinacy lemma that replaces jet realization. Section~\ref{sec:naturality} proves gauge covariance and substitution naturality. Section~\ref{sec:identifications} identifies both the pipeline output and the original BV equation as eigenpairing equations. Section~\ref{sec:proof} assembles the proof of Theorem~\ref{thm:main} and derives the corollaries. Section~\ref{sec:concluding} closes with structural remarks. All lemmas, and the closed form \eqref{eq:universal-gauge}, were additionally verified by exact rational computer algebra on generic variable-coefficient instances and generic variable substitutions of both orientation classes.

\section{The normal form, the residual identity, and the symmetry groupoid}\label{sec:setup}

We recall the objects of \cite{mass}, to which we refer for the full construction.

\subsection{Systems, pipeline, and BV data}

A \emph{smooth first-order planar elliptic system} on a domain $\Omega \subset \R^2$ is a pair of real equations in real unknowns $(u, v)$,
\begin{equation}\label{eq:real-system}
\begin{cases}
-v_y + a_{11}\, u_x + a_{12}\, u_y + a_{13}\, u + a_{14}\, v = f_1, \\[2pt]
\phantom{-}v_x + a_{21}\, u_x + a_{22}\, u_y + a_{23}\, u + a_{24}\, v = f_2,
\end{cases}
\end{equation}
with $a_{i1}, a_{i2} \in C^1(\Omega)$, lower-order data and forcings in $C^0(\Omega)$, and ellipticity $a_{11} > 0$, $a_{11}a_{22} - \tfrac14(a_{12}+a_{21})^2 > 0$. The skeleton form \eqref{eq:real-system} is reachable from any first-order $2\times 2$ real elliptic system by a left multiplication with an invertible matrix of functions; we treat that normalization as part of the pipeline, and we call a system \emph{admissible} if it is so reachable with $a_{11} > 0$.

The pipeline of \cite[\S 3]{mass} produces from \eqref{eq:real-system} the structure data $\alpha = a_{22}/a_{11}$, $\beta = -(a_{12}+a_{21})/a_{11}$, the spectral root $\lambda = \bigl(-\beta + i\sqrt{4\alpha - \beta^2}\bigr)/2 \in \HH^+$ of the structure polynomial $p(s) = s^2 + \beta s + \alpha$, the Beltrami coefficient $\mu = (\lambda - i)/(\lambda + i) \in \D$, and --- after the canonical substitution $U = a_{22}u$, $V = v - a_{12}u$, the obstruction correction, the fiber-algebra bundling, and the spectral and Cayley conversions --- the BV equation \eqref{eq:BV} with data $D = (\mu, \Acal, \Bcal, \Fcal)$, $\mu \in C^1$, $\Acal, \Bcal, \Fcal \in C^0$. We write $\pi$ for this normalization map. The single property of $\pi$ that
the proof below uses is the \emph{residual identity}: writing $R_1, R_2$ for the
residuals of the two equations \eqref{eq:real-system} and
$\mathrm{Res}_{\pi(L)}(w) = w_{\bar z} - \mu w_z + \Acal w + \Bcal \bar w - \Fcal$
for the residual of the output, one has, \emph{identically in the first jet of}
$(u,v)$,
\begin{equation}\label{eq:residual-identity}
\mathrm{Res}_{\pi(L)}(w) \;=\; \tfrac12(1-\mu)\bigl[-\lambda^2 R_1 + \lambda R_2\bigr], \qquad w = U + \lambda V .
\end{equation}
Both sides are affine functions of the first jet of $(u,v)$ at each point, and the
identity holds between these affine functions, not merely on solutions. It is a
direct consequence of the explicit pipeline of \cite[\S 3]{mass}; since
\cite{mass} does not display it in this form, we record the short derivation.

\begin{lemma}[Residual identity]\label{lem:residual}
For every admissible system $L$ in skeleton form \eqref{eq:real-system}, the
pipeline output $\pi(L)$ satisfies \eqref{eq:residual-identity}.
\end{lemma}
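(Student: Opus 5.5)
The plan is to trace the pipeline of \cite[\S 3]{mass} step by step and keep track, at each step, of the residual as an affine function of the first jet of $(u,v)$. The point is that every step is either an invertible pointwise change of unknowns, which rewrites the jet without changing the residual as a function, or a pointwise linear recombination of the equations. Composing these gives the claimed expression of $\mathrm{Res}_{\pi(L)}(w)$ as a linear combination of $R_1,R_2$.

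\emph{Step 1: the complex combination.} I will form $\lambda$ times the second equation minus $\lambda^2$ times the first. In the principal part, write $u_x,u_y,v_x,v_y$ and then pass to $U=a_{22}u$, $V=v-a_{12}u$. Using $\lambda^2+\beta\lambda+\alpha=0$, i.e.\ $a_{11}\lambda^2-(a_{12}+a_{21})\lambda+a_{22}=0$, the principal part of $-\lambda^2R_1+\lambda R_2$ should become a multiple of $\partial_x w-\lambda\,\partial_y w$ plus zeroth-order terms, where $w=U+\lambda V$. The zeroth-order terms are those in which derivatives fall on $a_{22}$, $a_{12}$, or $\lambda$. This is the classical Bers--Vekua factorization, and the multiplier should work out to exactly the normalization of \cite{mass}. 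I expect the normalizing factor $a_{11}$ to be absorbed by the skeleton convention, or else to appear in the prefactor.

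\emph{Step 2: the spectral and Cayley conversion.} With $\partial_x=\partial_z+\partial_{\bar z}$ and $\partial_y=i(\partial_z-\partial_{\bar z})$, I will rewrite
\[
\partial_x w-\lambda\,\partial_y w=(1+i\lambda)w_{\bar z}+(1-i\lambda)w_z .
\]
Dividing by $1+i\lambda$ gives $w_{\bar z}-\mu w_z$, since $\mu=(\lambda-i)/(\lambda+i)$ gives $-(1-i\lambda)/(1+i\lambda)=\mu$. The reciprocal factor $1/(1+i\lambda)$ should match $\tfrac12(1-\mu)$: indeed $1-\mu=2i/(\lambda+i)$, so $\tfrac12(1-\mu)=1/(1+i\lambda)\cdot$ (a unit to check). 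This step pins down the prefactor exactly.

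\emph{Step 3: the lower-order terms.} The obstruction correction and the fiber-algebra bundling reorganize the zeroth-order terms, which are real-linear in $(U,V)$ and hence in $(w,\bar w)$, using $U=(\bar\lambda w-\lambda\bar w)/(\bar\lambda-\lambda)$ and $V=(w-\bar w)/(\lambda-\bar\lambda)$. These operations are by construction algebraic identities and change no residual; they only rename the coefficients $\Acal,\Bcal,\Fcal$. Forcings pass through linearly. Since the whole derivation is identity manipulation on arbitrary jets, the affine identity holds off-shell.

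The main obstacle is bookkeeping in Step 1: confirming that the derivative terms from $\partial(a_{22})$, $\partial(a_{12})$, and $\partial\lambda$ are exactly what the obstruction correction of \cite{mass} calls $\Acal,\Bcal$, and that no extra overall factor (such as $a_{11}$ or $\lambda$) is introduced. I would settle this by comparing directly against the explicit formulas for $\Acal,\Bcal$ in \cite[\S 3]{mass}.
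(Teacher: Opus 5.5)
Your plan of following the residual through the pipeline, with each step an identity between affine functions of the first jet, is the right one. As written, however, the proposal does not prove the lemma.

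First, the explicit computation in Steps~1--2 has sign errors that make it fail. $R_1$ contains $-v_y$ and $R_2$ contains $+v_x$, so the combination $-\lambda^2R_1+\lambda R_2$ has $v_x$- and $v_y$-coefficients $\lambda$ and $\lambda^2$. Since $w=(a_{22}-\lambda a_{12})u+\lambda v$, and using $a_{11}\lambda^2-(a_{12}+a_{21})\lambda+a_{22}=0$ for the $u_x$-coefficient, the principal part is exactly that of $w_x+\lambda w_y$ (multiplier $1$, no $a_{11}$). This rules out any multiple of $w_x-\lambda w_y$. The conversion then also goes wrong: $-(1-i\lambda)/(1+i\lambda)=(\lambda+i)/(\lambda-i)=1/\mu$, not $\mu$. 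Moreover $1+i\lambda$ vanishes at $\lambda=i$, the Cauchy--Riemann case $\mu=0$, so dividing by it is not even possible there, and $1/(1+i\lambda)$ is not a unit multiple of $\tfrac12(1-\mu)$. The correct identities are $\partial_x+\lambda\partial_y=(1-i\lambda)(\partial_{\bar z}-\mu\,\partial_z)$, from $(1+i\lambda)/(1-i\lambda)=-\mu$, and $\tfrac12(1-\mu)=i/(\lambda+i)=1/(1-i\lambda)$ exactly.

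Second, and this is the real gap, Step~3 asserts what has to be proved. The lemma concerns the specific data $(\Acal,\Bcal,\Fcal)$ that $\pi$ outputs. Showing that $(1-i\lambda)^{-1}(-\lambda^2R_1+\lambda R_2)$ is the residual of \emph{some} BV equation in $w=U+\lambda V$ with Beltrami coefficient $\mu$ does not identify that equation with $\pi(L)$. You defer exactly this to an unperformed comparison with the formulas of \cite{mass}. The paper needs no such comparison, because it pushes the residual forward through the pipeline's own defining relations:
\begin{itemize}
\item The row operations of the canonical step give $\rho_1=\alpha R_1$ and $\rho_2=R_2+\beta R_1$.
\item The fiber-algebra product rule for $\tfrac12(\partial_x+\mathfrak{i}\,\partial_y)W$ is exact. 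Its extra term $GV$, with $G=\mathfrak{i}_x+\mathfrak{i}\,\mathfrak{i}_y$, is folded into $\tilde b=b-G_1$ and $\tilde d=d-G_2$; this is where the derivatives of $\lambda$ go.
\item $A,B,F$ are \emph{defined} by $AW+B\widehat W=\tfrac12[(aU+\tilde bV)+\mathfrak{i}(cU+\tilde dV)]$ and $F=\tfrac12(f+\mathfrak{i}\,g)$. Together with the product rule, this shows the fiber residual equals $\tfrac12(\rho_1+\mathfrak{i}\,\rho_2)$.
\item The homomorphism $\mathfrak{i}\mapsto\lambda$ sends $\widehat W\mapsto\bar w$ and the right side to $\tfrac12[(\alpha+\beta\lambda)R_1+\lambda R_2]=\tfrac12[-\lambda^2R_1+\lambda R_2]$.
\item The Cayley step multiplies by $1-\mu$.
\end{itemize}
Along this route the weights $(-\lambda^2,\lambda)$ are produced by the pipeline rather than posited. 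Your Step~1 computation, once its sign is corrected, serves only as a consistency check on the principal part.
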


\begin{proof}
We track the residual through the steps of \cite[\S 3]{mass}; every identity
invoked is displayed there, and the only computation added is the spectral
evaluation at the end. Throughout, equalities are between affine functions of the
first jet of $(u,v)$.

\emph{Canonical form \cite[Step~3]{mass}.} The substitution $U = a_{22}u$,
$V = v - a_{12}u$ followed by the row operations of \cite[Step~3]{mass} ---
multiply the first skeleton equation by $\alpha$, and add $\beta$ times the first
to the second --- carries the skeleton residuals to the canonical residuals
$\rho_1 := \alpha R_1$ and $\rho_2 := R_2 + \beta R_1$, since the row operations
act on residuals by the same linear combinations. With the canonical coefficients
$a,b,c,d$ and forcings $f,g$ of \cite[Step~3]{mass},
\[
U_x - \alpha V_y + aU + bV - f = \rho_1, \qquad
V_x + U_y - \beta V_y + cU + dV - g = \rho_2 .
\]

\emph{Bundling \cite[Steps~4--5]{mass}.} Set $W := U + \mathfrak{i}\,V$ in the
fiber algebra $\mathcal{A}_z = \R[X]/(X^2 + \beta X + \alpha)$,
$\mathfrak{i} := [X]$. The product-rule identity of \cite[Step~5]{mass},
\[
\tfrac12(\partial_x + \mathfrak{i}\,\partial_y)\,W
= \tfrac12\bigl[(U_x - \alpha V_y) + \mathfrak{i}(U_y + V_x - \beta V_y) + G\,V\bigr],
\qquad G = \mathfrak{i}_x + \mathfrak{i}\,\mathfrak{i}_y = G_1 + G_2\,\mathfrak{i},
\]
is exact. Substituting the two canonical-form relations for the bracketed
principal combinations and folding $G_1 V, G_2 V$ into the modified coefficients
$\tilde b = b - G_1$, $\tilde d = d - G_2$ of \cite[Step~4]{mass} gives
\[
\tfrac12(\partial_x + \mathfrak{i}\,\partial_y)\,W
= \tfrac12\bigl(\rho_1 + \mathfrak{i}\,\rho_2\bigr)
 - \tfrac12\bigl[(aU + \tilde b V) + \mathfrak{i}(cU + \tilde d V)\bigr]
 + \tfrac12(f + \mathfrak{i}\,g).
\]
By construction the fiber-Vekua data $A, B, F$ of \cite[Step~5]{mass} satisfy
$A W + B\widehat W = \tfrac12[(aU + \tilde b V) + \mathfrak{i}(cU + \tilde d V)]$ and
$F = \tfrac12(f + \mathfrak{i}\,g)$, where $\widehat W = U + \hat{\mathfrak{i}}\,V$,
$\hat{\mathfrak{i}} = -\beta - \mathfrak{i}$. Hence the last two groups equal
$-(A W + B\widehat W) + F$, and
\[
\tfrac12(\partial_x + \mathfrak{i}\,\partial_y)\,W + A W + B\widehat W - F
\;=\; \tfrac12\bigl(\rho_1 + \mathfrak{i}\,\rho_2\bigr).
\]

\emph{Spectral and Cayley conversion \cite[Steps~6--7]{mass}.} Apply the algebra
homomorphism $\mathcal{A}_z \to \C$, $\mathfrak{i} \mapsto \lambda$, which commutes
with the derivatives via $\mathfrak{i}_x \mapsto \lambda_x$, $\mathfrak{i}_y \mapsto \lambda_y$;
then $W \mapsto w = U + \lambda V$, $\widehat W \mapsto \bar w$, the left side
becomes the spectral-Vekua residual
$\mathrm{Res}_{\mathrm{spec}}(w) := \tfrac12(\partial_x + \lambda\partial_y)w
+ A_\lambda w + B_\lambda \bar w - F_\lambda$ of \cite[Step~6]{mass}, and the right
side becomes
\[
\tfrac12\bigl[\alpha R_1 + (R_2 + \beta R_1)\lambda\bigr]
= \tfrac12\bigl[(\alpha + \beta\lambda)R_1 + \lambda R_2\bigr]
= \tfrac12\bigl[-\lambda^2 R_1 + \lambda R_2\bigr],
\]
using $\alpha + \beta\lambda = -\lambda^2$ from $\lambda^2 + \beta\lambda + \alpha = 0$.
Finally the Cayley conversion of \cite[Step~7]{mass} uses
$\tfrac12(\partial_x + \lambda\partial_y) = \tfrac{1}{1-\mu}(\partial_{\bar z} - \mu\partial_z)$
and $(\Acal,\Bcal,\Fcal) = (1-\mu)(A_\lambda, B_\lambda, F_\lambda)$, so that
$\mathrm{Res}_{\pi(L)}(w) = (1-\mu)\,\mathrm{Res}_{\mathrm{spec}}(w)$. Combining,
\[
\mathrm{Res}_{\pi(L)}(w)
= (1-\mu)\cdot\tfrac12\bigl[-\lambda^2 R_1 + \lambda R_2\bigr]
= \tfrac12(1-\mu)\bigl[-\lambda^2 R_1 + \lambda R_2\bigr],
\]
which is \eqref{eq:residual-identity}.
\end{proof}

In particular $\pi$ is an equivalence: since $\tfrac12(1-\mu) \neq 0$ and
$-\lambda^2 R_1 + \lambda R_2 = 0$ with $R_1, R_2$ real forces $R_1 = R_2 = 0$
(as $\lambda \notin \R$), the output residual vanishes exactly when both skeleton
residuals do, so $(u,v)$ solves \eqref{eq:real-system} if and only if $w$ solves
\eqref{eq:BV}.

Let $\BV(\Omega)$ denote the BV data on $\Omega$, with the gauge action of $\Gcal(\Omega) = C^1(\Omega; \C^*)$,
\begin{equation}\label{eq:gauge-action}
\phi \cdot D : \qquad \mu' = \mu, \qquad \Acal' = \Acal - \frac{\phi_{\bar z}}{\phi} + \mu\,\frac{\phi_z}{\phi}, \qquad \Bcal' = \Bcal\,\frac{\phi}{\bar\phi}, \qquad \Fcal' = \phi\,\Fcal,
\end{equation}
which is the transformation law of \eqref{eq:BV} under $w = \phi\, w'$, and the diffeomorphism pullback of \cite[\S 5]{mass}. The density \eqref{eq:Theta}, the mass $\Mcal$, the zero locus and dichotomy of $\Bcal$, are invariant under \eqref{eq:gauge-action} and (covariantly) under orientation-preserving diffeomorphisms \cite{mass}.

\subsection{The symmetry groupoid and the statement}

\begin{definition}[Symmetry groupoid]\label{def:groupoid}
The \emph{natural symmetry groupoid} $\Scal$ of smooth first-order planar elliptic systems is generated by:
\begin{itemize}
\item[(R)] \emph{Row recombination}: left multiplication of the pair of equations by $M \in C^0(\Omega; \mathrm{GL}(2,\R))$;
\item[(U)] \emph{Unknown recombination}: the substitution $(u, v) = S(x,y) \cdot (u', v')$ with $S \in C^1(\Omega; \mathrm{GL}^+(2,\R))$, equivalently \eqref{eq:gen-sub} in complex notation;
\item[(D)] \emph{Coordinate change}: pullback by an orientation-preserving $C^1$ diffeomorphism between domains.
\end{itemize}
\end{definition}

The regularity split mirrors that of the pipeline: substitutions enter the principal part and are differentiated once, hence $C^1$; row recombinations never are, hence $C^0$. In complex notation a substitution is the pair $(\varphi, \psi)$ of \eqref{eq:gen-sub}, related to the matrix by $S = [\varphi] + [\psi]\begin{psmallmatrix} 1 & 0 \\ 0 & -1 \end{psmallmatrix}$, where for $c \in \C$ we write $[c]$ for the real matrix $\begin{psmallmatrix} \operatorname{Re} c & -\operatorname{Im} c \\ \operatorname{Im} c & \operatorname{Re} c \end{psmallmatrix}$ of multiplication by $c$ in the basis $(1, i)$; then $\det S = |\varphi|^2 - |\psi|^2$. Throughout, $[\,\cdot\,]$ retains this meaning, and $e := (1, i)$ denotes the \emph{ambient covector}, so that $e\, [c] = c\, e$ and $e \cdot (u,v) = u + iv$.

The action (D) was treated in \cite{mass}, and (R) is absorbed identically by the skeletonization (this is immediate, and re-proved below in a sharper form: every construction of this paper is built from a presentation-independent normal form, Section~\ref{sec:eigenpairing}). The content of Theorem~\ref{thm:main} is the action (U): substituting \eqref{eq:gen-sub} into a BV equation destroys the BV form --- the conjugate-derivative term $\bar v'_{\bar z}$ generated by $\psi \neq 0$ has no slot in \eqref{eq:BV}, which is why the complex gauges are precisely the stabilizer of the form --- but the underlying real system remains elliptic, the pipeline applies to it, and the theorem asserts that the output returns to the gauge orbit of the input, with the explicit gauge \eqref{eq:universal-gauge}.

\section{The eigenpairing equation}\label{sec:eigenpairing}

Every first-order $2\times 2$ system with invertible $\partial_x$-coefficient matrix has a unique presentation
\begin{equation}\label{eq:normalized}
V_x + N\, V_y + P\, V = h, \qquad V = (u, v),
\end{equation}
obtained by left multiplication with the inverse of that matrix; in skeleton form,
\begin{equation}\label{eq:N-def}
N = \sigma(e_1)^{-1}\sigma(e_2), \qquad
P = \sigma(e_1)^{-1}\begin{psmallmatrix} a_{13} & a_{14} \\ a_{23} & a_{24} \end{psmallmatrix}, \qquad
h = \sigma(e_1)^{-1}(f_1, f_2),
\end{equation}
where $\sigma$ is the principal symbol, $\sigma(e_1) = \begin{psmallmatrix} a_{11} & 0 \\ a_{21} & 1 \end{psmallmatrix}$, $\sigma(e_2) = \begin{psmallmatrix} a_{12} & -1 \\ a_{22} & 0 \end{psmallmatrix}$, and $e_1, e_2$ is the coordinate frame of covectors. Since any row recombination cancels in the normalization --- for $M \cdot L$ the $\partial_x$-coefficient matrix is $M$ times that of $L$, and $(M\,\cdot\,)^{-1}(M \cdot L)$ collapses --- the presentation \eqref{eq:normalized} depends only on the (R)-class of the system, and every construction of this paper, being built from $(N, P, h)$, absorbs (R) identically. The regularity inherited from the admissible class is $N \in C^1$, $P, h \in C^0$.

The endomorphism field $N$ of the unknown bundle is the spectral protagonist:

\begin{lemma}[Spectral data of the symbol]\label{lem:symbol}
For an admissible system in skeleton form,
\begin{equation}\label{eq:conic-identity}
\det \sigma(-s, 1) \;=\; a_{11}\,\bigl(s^2 + \beta s + \alpha\bigr) \;=\; a_{11} \det(N - sI),
\end{equation}
so the characteristic polynomial of $N$ is the structure polynomial $p(s)$ of the pipeline; ellipticity ($4\alpha - \beta^2 > 0$) is the condition that $N$ has no real eigenvalues, and its eigenvalues $\lambda \in \HH^+$, $\bar\lambda$ are simple. Under (R), $\sigma \mapsto M\sigma$ leaves $N$ untouched; under (U), $\sigma \mapsto \sigma S$ conjugates it, $N \mapsto S^{-1} N S$, while $\det\sigma(\xi)$ is multiplied by $\det S$. Hence $(\alpha, \beta)$, and with them $\lambda$ and $\mu$, are invariant under all of (R) and (U).
\end{lemma}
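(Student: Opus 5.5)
The plan is to compute $N = \sigma(e_1)^{-1}\sigma(e_2)$ explicitly, compare its characteristic polynomial with $\det\sigma(-s,1)$, and then read off the transformation laws under (R) and (U) from how $\sigma$ itself transforms. The whole argument is finite-dimensional linear algebra done pointwise.

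\emph{The identity \eqref{eq:conic-identity}.} With the symbol as displayed, $\sigma(\xi) = \xi_1\sigma(e_1) + \xi_2\sigma(e_2)$. Hence
\[
\sigma(-s,1) = \begin{psmallmatrix} a_{12} - s a_{11} & -1 \\ a_{22} - s a_{21} & -s \end{psmallmatrix},
\]
whose determinant is
\[
-s(a_{12} - s a_{11}) + a_{22} - s a_{21} = a_{11}s^2 - (a_{12}+a_{21})s + a_{22} = a_{11}(s^2 + \beta s + \alpha).
\]
For the second equality I use linearity of the symbol: $\sigma(-s,1) = \sigma(e_2) - s\,\sigma(e_1) = \sigma(e_1)(N - sI)$. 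Taking determinants gives $\det\sigma(e_1)\det(N - sI)$, and $\det\sigma(e_1) = a_{11}$ from the lower-triangular-type matrix displayed.

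\emph{Spectral consequences.} A real $2\times2$ matrix with characteristic polynomial $p$ has real eigenvalues iff the discriminant $\beta^2 - 4\alpha \ge 0$. So the ellipticity condition $4\alpha - \beta^2 > 0$ is equivalent to $N$ having no real eigenvalues. Indeed, $4\alpha - \beta^2 = \bigl(4a_{11}a_{22} - (a_{12}+a_{21})^2\bigr)/a_{11}^2$, which is positive by the standing hypothesis. The two roots are then the complex-conjugate pair $\lambda, \bar\lambda$ with $\operatorname{Im}\lambda > 0$. They are distinct, hence simple.

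\emph{Behaviour under (R).} Row recombination replaces $\sigma(\xi)$ by $M\sigma(\xi)$, so
\[
N' = (M\sigma(e_1))^{-1} M\sigma(e_2) = N.
\]
This is the cancellation already noted after \eqref{eq:N-def}.

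\emph{Behaviour under (U).} Substituting $V = SV'$ gives
\[
\sigma(e_j)\,\partial_j(SV') = \sigma(e_j)\,S\,\partial_j V' + \sigma(e_j)\,(\partial_j S)\,V'.
\]
The second term is zeroth order, so it modifies $P$ but not the principal part. Thus $\sigma \mapsto \sigma S$, which gives $N' = S^{-1}\sigma(e_1)^{-1}\sigma(e_2)S = S^{-1}NS$ and $\det\sigma'(\xi) = \det S\,\det\sigma(\xi)$.

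\emph{Invariance of the structure data.} Conjugation preserves the characteristic polynomial, so $p$ is unchanged, and hence so are $(\alpha,\beta)$ (its coefficients), $\lambda$ (its root in $\HH^+$), and $\mu = (\lambda - i)/(\lambda+i)$. Note that $\alpha$ and $\beta$ are defined by the pipeline after re-skeletonizing, so I must use the intrinsic description: the pipeline's $(\alpha,\beta)$ equal the coefficients of the characteristic polynomial of $N$. This follows from the identity just proved applied to the re-skeletonized system, combined with (R)-invariance of $N$. Under (U) with $\det S > 0$, the sign of $a_{11} = \det\sigma(e_1)$ is preserved up to the positive factor $\det S$, so admissibility is retained.

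\emph{Main obstacle.} The only delicate point is this last bookkeeping step. Skeleton coefficients are not themselves (U)-invariant, so one must argue through $N$, whose characteristic polynomial is presentation-independent, rather than through $a_{ij}$ directly.
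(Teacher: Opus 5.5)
Your proof is correct and is essentially the paper's: the same evaluation of $\det\sigma(-s,1)$, the factorization $\sigma(-s,1)=\sigma(e_1)(N-sI)$, and the transformation laws read off from $N=\sigma(e_1)^{-1}\sigma(e_2)$. The one difference is the last step: you get invariance of $(\alpha,\beta)$ from conjugation-invariance of the characteristic polynomial of $N$ plus (R)-invariance through re-skeletonization, whereas the paper reads it off as ratios of conic coefficients rescaled by $\det M$ and $\det S$; these are two faces of the same identity \eqref{eq:conic-identity}.

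Your closing remark on admissibility lies outside the lemma, and its justification is incomplete as written. It conflates $\det\sigma'(e_1)=a_{11}\det S$ with the re-skeletonized $a_{11}$, which also carries the determinant of the skeletonizing row matrix. The conclusion is still true, but for a different reason: in skeleton form $a_{11}=-1/N_{12}$. For a real $2\times 2$ matrix without real eigenvalues, the sign of $N_{12}$ (its rotation sense) is preserved under conjugation by $S$ with $\det S>0$.
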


\begin{proof}
In skeleton form $\det\sigma(\xi) = a_{11}\xi_1^2 + (a_{12}+a_{21})\xi_1\xi_2 + a_{22}\xi_2^2$; evaluating at $(-s, 1)$ and dividing by $a_{11}$ gives $s^2 + \beta s + \alpha$ with the definitions of Section~\ref{sec:setup}, and $\det\sigma(-s,1) = \det\sigma(e_1)\det(N - sI)$ with $\det\sigma(e_1) = a_{11}$. Real eigenvalues of $N$ are real roots of $p$, excluded by $4\alpha - \beta^2 > 0$. The transformation laws are immediate from \eqref{eq:N-def} and multiplicativity of the determinant; $(\alpha, \beta)$ are ratios of conic coefficients, insensitive to the scalings by $\det M$ and $\det S$, and $\lambda$, $\mu$ are functions of $(\alpha, \beta)$ alone.
\end{proof}

A \emph{$\lambda$-eigencovector field} of \eqref{eq:normalized} is a nowhere-vanishing $\omega \in C^1(\Omega; \C^2)$ with $\omega N = \lambda\, \omega$ (a row vector). The central construction of the paper attaches a BV equation to every such field.

\begin{lemma}[The eigenpairing equation]\label{lem:eigen-eq}
Let \eqref{eq:normalized} be elliptic and let $\omega$ be a $\lambda$-eigencovector field. Then:
\begin{itemize}
\item[(i)] The matrix $\Xi_\omega$ with rows $\omega, \bar\omega$ is invertible at every point.
\item[(ii)] Define the \emph{connection covector} and the BV data
\begin{equation}\label{eq:E-data}
\varrho(\omega) := \omega_x + \lambda\, \omega_y - \omega P, \qquad
(\Acal, \Bcal) := -\frac{\varrho(\omega)\, \Xi_\omega^{-1}}{1 - i\lambda}, \qquad
\Fcal := \frac{\omega \cdot h}{1 - i\lambda},
\end{equation}
all in $C^0(\Omega; \C)$, with $\mu = (\lambda - i)/(\lambda + i)$. Then for every $V \in C^1(\Omega; \R^2)$, with $w := \omega \cdot V$,
\begin{equation}\label{eq:E-residual}
w_{\bar z} - \mu\, w_z + \Acal\, w + \Bcal\, \bar w - \Fcal \;=\; \frac{1}{1 - i\lambda}\; \omega \cdot \bigl( V_x + N V_y + P V - h \bigr)
\end{equation}
identically in the first jet of $V$. We write $E(\omega) := (\mu, \Acal, \Bcal, \Fcal) \in \BV(\Omega)$ for this equation, the \emph{eigenpairing equation} of $\omega$.
\item[(iii)] Since the rows of $\Xi_\omega$ are pointwise independent by (i), identity \eqref{eq:E-residual} exhibits the residual of $E(\omega)$ as a pointwise-invertible complex combination of the two real residuals: $V$ solves the system if and only if $w = \omega V$ solves $E(\omega)$.
\end{itemize}
\end{lemma}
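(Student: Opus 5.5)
The plan is to prove the three parts in order. The only substantive point is the residual identity (ii). It comes from a first-order product rule in which the eigencovector relation $\omega N = \lambda\omega$ converts the matrix operator $\partial_x + N\partial_y$ into the scalar operator $\partial_x + \lambda\partial_y$.

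For (i), I will use that $N$ is a real matrix field. Conjugating $\omega N = \lambda\omega$ gives $\bar\omega N = \bar\lambda\bar\omega$, so $\bar\omega$ is a left eigencovector for $\bar\lambda$. By Lemma~\ref{lem:symbol}, $\lambda \in \HH^+$, hence $\lambda \neq \bar\lambda$ pointwise. Left eigencovectors for distinct eigenvalues are linearly independent, and both $\omega$ and $\bar\omega$ are nowhere vanishing, so the rows of $\Xi_\omega$ are independent and $\Xi_\omega$ is invertible at every point. For the regularity claims, $\lambda$ is a smooth function of $(\alpha,\beta)$ and hence $C^1$, $\omega$ is $C^1$, and $P, h \in C^0$. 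Together with continuity of $\Xi_\omega^{-1}$ this gives $\varrho(\omega), \Acal, \Bcal, \Fcal \in C^0$. Finally, $1 - i\lambda \neq 0$ because $1 - i\lambda = 0$ would force $\lambda = -i \notin \HH^+$.

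For (ii), I will expand the right side of \eqref{eq:E-residual} (before dividing by $1-i\lambda$). The eigen-relation gives $\omega N V_y = \lambda\,\omega V_y$. The product rule gives $\omega V_x + \lambda\,\omega V_y = w_x + \lambda w_y - (\omega_x + \lambda\omega_y)V$. Hence
\[
\omega\cdot(V_x + NV_y + PV - h) = w_x + \lambda w_y - \varrho(\omega)\,V - \omega\cdot h .
\]
Because $V$ is real, $\Xi_\omega V = (w, \bar w)^{T}$, so $\varrho(\omega)V = \varrho(\omega)\Xi_\omega^{-1}(w,\bar w)^T$. By the definition \eqref{eq:E-data} this equals $-(1-i\lambda)(\Acal w + \Bcal\bar w)$. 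Writing $\partial_x = \partial_z + \partial_{\bar z}$ and $\partial_y = i(\partial_z - \partial_{\bar z})$ gives
\[
w_x + \lambda w_y = (1-i\lambda)w_{\bar z} + (1+i\lambda)w_z = (1-i\lambda)\bigl(w_{\bar z} - \mu w_z\bigr),
\]
since $-(1+i\lambda)/(1-i\lambda) = (\lambda - i)/(\lambda+i) = \mu$. Dividing by $1-i\lambda$ yields \eqref{eq:E-residual}. Every step is an identity between affine functions of the first jet of $V$, so no equation is imposed on $V$. I will also note $|\mu|<1$ for $\lambda\in\HH^+$, so that $E(\omega)\in\BV(\Omega)$.

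For (iii), the residual vector $r = V_x + NV_y + PV - h$ is real. By (ii) the residual of $E(\omega)$ is $(1-i\lambda)^{-1}\omega\cdot r$, and its conjugate is $(1-\bar\lambda i)^{-1}$-type multiple of $\bar\omega\cdot r$ (precisely, $\overline{(1-i\lambda)}^{\,-1}\,\bar\omega\cdot r$). So the residual of $E(\omega)$ vanishes if and only if $\Xi_\omega r = 0$, which by (i) holds if and only if $r = 0$. I expect no real obstacle. The step requiring care is keeping $V$ real, which is exactly what allows $\varrho(\omega)V$ to be rewritten in terms of $w$ and $\bar w$ through $\Xi_\omega^{-1}$.
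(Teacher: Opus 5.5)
Your proposal is correct and follows the paper's proof essentially step for step: the Leibniz rule with $\omega N V_y = \lambda\,\omega V_y$, the rewriting $\varrho(\omega)V = \varrho(\omega)\Xi_\omega^{-1}(w,\bar w)^{\mathsf T}$ using reality of $V$, and the operator identity $\partial_x + \lambda\partial_y = (1-i\lambda)(\partial_{\bar z} - \mu\,\partial_z)$. The only cosmetic difference is in (i), where you use independence of eigencovectors for the distinct eigenvalues $\lambda \neq \bar\lambda$. The paper instead observes that a singular $\Xi_\omega$ would make $\omega$ a complex multiple of a real eigencovector, which would force $\lambda \in \R$.
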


\begin{proof}
(i) If $\Xi_\omega$ were singular at a point, $\omega$ would there be a complex multiple of a real covector $\omega_r \neq 0$; then $\omega_r N = \lambda\, \omega_r$ with $N$ real forces $\lambda \in \R$, contradicting ellipticity.

(ii) The operator identity $\partial_x + \lambda\, \partial_y = (1 - i\lambda)\bigl(\bar\partial - \mu\, \partial\bigr)$ follows from $\partial_x = \partial + \bar\partial$, $\partial_y = i(\partial - \bar\partial)$ and
\[
\frac{1 + i\lambda}{1 - i\lambda} \;=\; \frac{i(\lambda - i)}{-i(\lambda + i)} \;=\; -\mu .
\]
By the Leibniz rule and $\lambda\, \omega V_y = \omega N V_y$,
\[
w_x + \lambda\, w_y \;=\; (\omega_x + \lambda\, \omega_y)\, V + \omega\,(V_x + N V_y) \;=\; \varrho(\omega)\, V + \omega\,\bigl(V_x + N V_y + P V\bigr).
\]
Since $V$ is real, $(w, \bar w)^{\mathsf T} = \Xi_\omega V$, so $V = \Xi_\omega^{-1}(w, \bar w)^{\mathsf T}$ and the definition of $(\Acal, \Bcal)$ gives $\Acal w + \Bcal \bar w = -(1 - i\lambda)^{-1}\varrho(\omega)\, V$. Dividing the display by $(1 - i\lambda)$ and subtracting $\Fcal$ yields \eqref{eq:E-residual}. The coefficients lie in $C^0$ because $\omega, \lambda \in C^1$ and $P, h \in C^0$. (iii) is immediate from (i) and \eqref{eq:E-residual}.
\end{proof}

The companion lemma is the pointwise linear-algebra fact that will replace, everywhere below, any appeal to the existence or abundance of solutions.

\begin{lemma}[Determinacy]\label{lem:determinacy}
Two equations of BV form with the same Beltrami coefficient whose residuals agree on the pairing $w = \omega V$, identically in the first jet of $V$, for a single eigencovector field $\omega$, are equal.
\end{lemma}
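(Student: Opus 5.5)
Write the two equations as $D_1 = (\mu, \Acal_1, \Bcal_1, \Fcal_1)$ and $D_2 = (\mu, \Acal_2, \Bcal_2, \Fcal_2)$, with the common Beltrami coefficient. The hypothesis says that for every $V \in C^1(\Omega;\R^2)$, with $w = \omega \cdot V$,
\[
\mathrm{Res}_{D_1}(w) = \mathrm{Res}_{D_2}(w)
\]
identically in the first jet of $V$. The principal parts $w_{\bar z} - \mu w_z$ coincide and cancel. What remains is a pointwise identity
\[
(\Acal_1 - \Acal_2)\, w + (\Bcal_1 - \Bcal_2)\, \bar w - (\Fcal_1 - \Fcal_2) = 0 ,
\]
required for every value of the zeroth jet $V(p) \in \R^2$ at each point $p$. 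The derivative jet has dropped out entirely, so the problem reduces to linear algebra at a single point.

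First I separate the forcing. Taking $V(p) = 0$ gives $w(p) = 0$, so the identity forces $\Fcal_1 = \Fcal_2$ at $p$. Since the zeroth jet at $p$ can be prescribed arbitrarily by a $C^1$ (even constant) $V$, this holds at every point.

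Second, the remaining identity is linear in $V(p)$, and I use Lemma~\ref{lem:eigen-eq}(i). As $V$ ranges over $\R^2$, the pair $(w, \bar w)^{\mathsf T} = \Xi_\omega V$ ranges over the real two-dimensional subspace $\{(\zeta, \bar\zeta) : \zeta \in \C\}$, and $w = \zeta$ ranges over all of $\C$. The identity then reads $a\zeta + b\bar\zeta = 0$ for all $\zeta \in \C$, where $a = \Acal_1 - \Acal_2$ and $b = \Bcal_1 - \Bcal_2$. Taking $\zeta = 1$ gives $a + b = 0$, and taking $\zeta = i$ gives $a - b = 0$, so $a = b = 0$. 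Equivalently, $(a, b)\,\Xi_\omega = 0$ with $\Xi_\omega$ invertible.

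The only point needing care is the surjectivity of $V \mapsto \omega \cdot V$ onto $\C$ at each point, which is exactly the invertibility in Lemma~\ref{lem:eigen-eq}(i) and hence ellipticity. The eigenvector property of $\omega$ is otherwise irrelevant here: pointwise independence of $\omega$ and $\bar\omega$ suffices. No step is a real obstacle.
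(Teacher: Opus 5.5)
Your proof is correct and follows the paper's argument: the principal parts cancel, the difference composed with the pairing is the affine function $(a\,\omega + b\,\bar\omega)V + c$ of the zeroth jet, and the pointwise independence of $\omega$ and $\bar\omega$ from Lemma~\ref{lem:eigen-eq}(i) forces $a=b=c=0$. Your remark is also accurate: the argument uses only the invertibility of $\Xi_\omega$, and the eigencovector property matters only because it guarantees that invertibility.
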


\begin{proof}
The difference of the two residuals is $a w + b \bar w + c$, with no derivative terms since the principal parts coincide. Composed with the pairing it reads $(a\,\omega + b\,\bar\omega)\, V + c$, which vanishes for all values $V(z_0) \in \R^2$ at a point only if $a\,\omega + b\,\bar\omega = 0$ and $c = 0$ there; the rows of $\Xi_\omega$ are independent by Lemma~\ref{lem:eigen-eq}(i), so $a = b = c = 0$.
\end{proof}

\section{Gauge covariance and substitution naturality}\label{sec:naturality}

The eigencovector line of the simple eigenvalue $\lambda$ admits two natural operations: rescaling a section by a nowhere-vanishing complex field, and transporting it under an unknown recombination. The next two lemmas compute the effect of each on the eigenpairing equation; together they say that $\omega \mapsto E(\omega)$ converts the eigencovector ambiguity into precisely the gauge action, and nothing else.

\begin{lemma}[Gauge covariance]\label{lem:gauge-cov}
For $\phi \in C^1(\Omega; \C^*)$,
\[
E(\phi\, \omega) \;=\; \phi \cdot E(\omega)
\]
in the sense of the gauge action \eqref{eq:gauge-action}, with the pairings related by $(\phi\omega) V = \phi\,(\omega V)$.
\end{lemma}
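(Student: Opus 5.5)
The plan is to avoid computing $\varrho(\phi\omega)$ and $\Xi_{\phi\omega}^{-1}$ directly, and to use instead the residual identity \eqref{eq:E-residual} together with the Determinacy Lemma~\ref{lem:determinacy}. First, $\phi\omega$ is again a $\lambda$-eigencovector field: it is $C^1$ and nowhere vanishing, and $(\phi\omega)N = \phi\lambda\omega = \lambda(\phi\omega)$. So $E(\phi\omega)$ is defined. Its Beltrami coefficient is $\mu = (\lambda-i)/(\lambda+i)$, the same as that of $E(\omega)$ and of $\phi\cdot E(\omega)$, since the gauge action fixes $\mu$.

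Next, write $w := \omega V$ and $w' := (\phi\omega)V = \phi w$, and apply \eqref{eq:E-residual} to $\phi\omega$:
\[
\mathrm{Res}_{E(\phi\omega)}(\phi w) \;=\; \frac{1}{1-i\lambda}\,\phi\omega\cdot\bigl(V_x+NV_y+PV-h\bigr) \;=\; \phi\,\mathrm{Res}_{E(\omega)}(w),
\]
identically in the first jet of $V$.

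We then need the transformation law defining \eqref{eq:gauge-action}. If $D' = \phi\cdot D$, then for every $C^1$ function $w$ one has $\mathrm{Res}_{D}(\phi w) = \phi\,\mathrm{Res}_{D'}(w)$, read with the correct orientation of the substitution $w=\phi w'$. We verify this by a one-line Leibniz computation:
\[
(\phi w)_{\bar z} - \mu(\phi w)_z = \phi\,(w_{\bar z}-\mu w_z) + (\phi_{\bar z}-\mu\phi_z)\,w,
\]
and $\Bcal\,\overline{\phi w} = \phi\,\bigl(\Bcal\,\bar\phi/\phi\bigr)\bar w$.

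This step carries the main risk: the bookkeeping of which side $\phi$ versus $\phi^{-1}$ lands on. The convention in \eqref{eq:gauge-action} is stated for $w = \phi w'$, i.e.\ the new unknown is $w' = \phi^{-1}w$. In our setting the new pairing is $\phi w$, so the relation may come out as $E(\phi\omega) = \phi^{-1}\cdot E(\omega)$ in that convention, or the ratio $\Bcal\,\phi/\bar\phi$ may appear inverted. I would settle this by comparing the two residual identities directly. Multiply $\mathrm{Res}_{E(\omega)}(w)$ by $\phi$ and rewrite it in terms of $\phi w$, which gives the data $\Acal - \phi_{\bar z}/\phi + \mu\phi_z/\phi$, $\Bcal\,\phi/\bar\phi$, $\phi\Fcal$. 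This matches \eqref{eq:gauge-action} exactly, so the formula's convention is that $\phi\cdot D$ is the equation satisfied by $\phi w$.

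Finally, $E(\phi\omega)$ and $\phi\cdot E(\omega)$ are two BV-form equations with the same $\mu$. Their residuals agree on the pairing $w' = (\phi\omega)V$ identically in the first jet of $V$, so Lemma~\ref{lem:determinacy}, applied with the eigencovector $\phi\omega$, gives equality.
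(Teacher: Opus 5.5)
Your argument is correct, but it takes a different route from the paper. The paper works directly on the defining formulas \eqref{eq:E-data}. It computes $\varrho(\phi\omega) = \phi\,\varrho(\omega) + (\phi_x + \lambda\phi_y)\,\omega$ and $\Xi_{\phi\omega} = \operatorname{diag}(\phi,\bar\phi)\,\Xi_\omega$. It then uses $\omega\,\Xi_\omega^{-1} = (1,0)$ to evaluate $\varrho(\phi\omega)\,\Xi_{\phi\omega}^{-1}$ componentwise, and converts $\phi_x + \lambda\phi_y$ into $(1-i\lambda)(\phi_{\bar z} - \mu\phi_z)$ to read off $\Acal'$, $\Bcal'$, $\Fcal'$.

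You never touch $\Xi^{-1}$. The residual identity \eqref{eq:E-residual} gives $\mathrm{Res}_{E(\phi\omega)}(\phi w) = \phi\,\mathrm{Res}_{E(\omega)}(w)$ at no cost. A Leibniz computation gives $\mathrm{Res}_{\phi\cdot D}(\phi w) = \phi\,\mathrm{Res}_D(w)$. Lemma~\ref{lem:determinacy}, applied to the eigencovector $\phi\omega$, then closes the argument. This is the same residual-plus-determinacy mechanism the paper uses for Proposition~\ref{prop:pipeline-id} and Lemma~\ref{lem:realification}, so your version makes the whole proof run on one principle. The paper's computation instead shows explicitly how $\varrho$ transforms under rescaling: affinely, like a connection.

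Keep your orientation check explicit in a write-up. It shows that \eqref{eq:gauge-action} as displayed is the equation satisfied by $\phi w$ when $w$ solves $D$. That is exactly what the lemma uses, and what the assembly in Theorem~\ref{thm:main} uses. So the gloss ``under $w = \phi w'$'' in Section~\ref{sec:setup} is consistent only if $w'$ denotes the old unknown.

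For the same reason, your provisional identity $\mathrm{Res}_D(\phi w) = \phi\,\mathrm{Res}_{\phi\cdot D}(w)$ is false as written. It holds with $\phi^{-1}\cdot D$ in place of $\phi\cdot D$. Replace it with the corrected identity you derive afterwards.
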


\begin{proof}
One computes $\varrho(\phi\omega) = \phi\, \varrho(\omega) + (\phi_x + \lambda \phi_y)\, \omega$ and $\Xi_{\phi\omega} = \operatorname{diag}(\phi, \bar\phi)\, \Xi_\omega$. Since $\omega\, \Xi_\omega^{-1} = (1, 0)$,
\[
\varrho(\phi\omega)\, \Xi_{\phi\omega}^{-1}
= \Bigl( \bigl(\varrho(\omega)\Xi_\omega^{-1}\bigr)_1 + \frac{\phi_x + \lambda\phi_y}{\phi},\;\; \frac{\phi}{\bar\phi}\, \bigl(\varrho(\omega)\Xi_\omega^{-1}\bigr)_2 \Bigr),
\]
and $\phi_x + \lambda\phi_y = (1 - i\lambda)(\phi_{\bar z} - \mu\, \phi_z)$ by the operator identity of Lemma~\ref{lem:eigen-eq}. Multiplying by $-(1 - i\lambda)^{-1}$ yields $\Acal' = \Acal - \phi_{\bar z}/\phi + \mu\, \phi_z/\phi$ and $\Bcal' = \Bcal\, \phi/\bar\phi$; and $\Fcal' = \phi\, \Fcal$ directly from \eqref{eq:E-data}. These are the laws \eqref{eq:gauge-action}.
\end{proof}

\begin{lemma}[Substitution naturality]\label{lem:naturality}
Let $S \in C^1(\Omega; \mathrm{GL}(2,\R))$, of either orientation, and transform the system \eqref{eq:normalized} by $V = S V'$. Its normalized data become
\[
N' = S^{-1} N S, \qquad P' = S^{-1}\bigl(S_x + N S_y + P S\bigr), \qquad h' = S^{-1} h,
\]
the field $\omega' := \omega S$ is a $\lambda$-eigencovector of $N'$, and
\[
E'(\omega S) \;=\; E(\omega)
\]
--- identical data $(\mu, \Acal, \Bcal, \Fcal)$ --- with the pairings related by $\omega' V' = \omega V$.
\end{lemma}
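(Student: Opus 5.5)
The plan is to verify the transformed normalized data directly, and then to obtain the equality of eigenpairing equations from the residual identity \eqref{eq:E-residual} together with the determinacy lemma. This avoids comparing the connection covectors term by term.

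\emph{Step 1: the transformed data.} Substituting $V = SV'$ into \eqref{eq:normalized} and using the Leibniz rule gives
\[
V_x + NV_y + PV - h \;=\; S V'_x + N S V'_y + \bigl(S_x + N S_y + PS\bigr)V' - h .
\]
Multiplying on the left by $S^{-1}$ puts this in the normalized form $V'_x + N'V'_y + P'V' - h'$, with $N', P', h'$ as stated; the normalized presentation is unique, as noted after \eqref{eq:normalized}. The regularity $S\in C^1$ is exactly what keeps $N'\in C^1$ and $P'\in C^0$.

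\emph{Step 2: eigencovector, Beltrami coefficient and pairing.} The identity $(\omega S)(S^{-1}NS) = \omega N S = \lambda\,\omega S$ shows that $\omega S$ is a $\lambda$-eigencovector. It is nowhere vanishing and $C^1$, since $S$ is invertible and $C^1$. Because $N'$ is conjugate to $N$, it has the same eigenvalues, so $\lambda$ and $\mu$ are unchanged; this works for either orientation, since only conjugation is used here. Lemma~\ref{lem:symbol} records the same invariance under (U), but the conjugation argument covers $\det S<0$ as well. Finally $\omega' V' = \omega S V' = \omega V$.

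\emph{Step 3: the key identity.} Apply \eqref{eq:E-residual} to the primed system with $w = \omega' V' = \omega V$:
\[
\mathrm{Res}_{E'(\omega S)}(w) = \frac{1}{1-i\lambda}\,\omega S\cdot S^{-1}\bigl(V_x + NV_y + PV - h\bigr) = \frac{1}{1-i\lambda}\,\omega\cdot\bigl(V_x+NV_y+PV-h\bigr) = \mathrm{Res}_{E(\omega)}(w).
\]
This is where the derivative contamination $S_x + NS_y$ cancels. It is absorbed into $P'$ and then reabsorbed when the pairing is pulled back through Step 1. Since $V'\mapsto V = SV'$ is a pointwise bijection of first jets, the identity holds identically in the first jet of $V$. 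Both equations are of BV form with the same $\mu$, so Lemma~\ref{lem:determinacy}, applied with the single field $\omega$, gives $E'(\omega S) = E(\omega)$.

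The main obstacle is the bookkeeping in Step 3. One must check that the first jets of $V'$ and of $V$ range over the same affine space of jets, so that "identically in the jet of $V'$" transfers to "identically in the jet of $V$". One must also check that the hypotheses of Lemma~\ref{lem:determinacy} are met exactly: equal Beltrami coefficients and a common pairing. Both points hold because $S$ is pointwise invertible: the jet of $V$ at a point is $(SV', S_xV' + SV'_x, S_yV' + SV'_y)$, which is an invertible affine change of variables.

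As a cross-check, one can verify the connection covector directly:
\[
\varrho'(\omega S) = (\omega S)_x + \lambda(\omega S)_y - \omega S P' = \varrho(\omega)S + \omega\bigl(S_x + NS_y\bigr) - \omega\bigl(S_x + NS_y\bigr) = \varrho(\omega)\,S .
\]
Together with $\Xi_{\omega S} = \Xi_\omega S$, this gives the same $(\Acal,\Bcal)$. The data $\Fcal$ is unchanged because $\omega S\cdot S^{-1}h = \omega\cdot h$.
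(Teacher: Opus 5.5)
Your proof is correct. Steps 1 and 2 coincide with the paper's, but you establish $E'(\omega S)=E(\omega)$ by a genuinely different main route.

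The paper never passes through residuals. It computes directly that the connection covector transforms tensorially, $\varrho'(\omega S)=\varrho(\omega)S$: the two $\omega S_x$ terms cancel, and $\lambda\,\omega S_y-\omega N S_y$ vanishes by the eigencovector relation. It then uses $\Xi_{\omega S}=\Xi_\omega S$ and $\omega' h'=\omega h$ to read off identical data from \eqref{eq:E-data}. That is exactly your closing cross-check.

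Your main route instead treats $E(\omega)$ as characterized by its residual on the pairing. You observe that the normalized residual transforms by $S^{-1}$, which is nothing more than the definition of $P'$, and then invoke Lemma~\ref{lem:determinacy}. This is the same mechanism the paper uses for Proposition~\ref{prop:pipeline-id} and Lemma~\ref{lem:realification}.

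What your route buys is that naturality becomes formal. The derivative contamination $S_x+NS_y$ is absorbed by the definition of $P'$, and no inversion of $\Xi$ or coefficient comparison is needed. What the paper's route buys is the explicit tensorial law for $\varrho$. The concluding remarks single out that law as the intrinsic form of the pipeline's derivative terms, and it does not drop out of the determinacy argument.

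The jet bookkeeping you flag as the main obstacle is lighter than you suggest. Lemma~\ref{lem:determinacy} only needs the identity for every $V\in C^1(\Omega;\R^2)$, and pointwise only the values $V(z_0)$ matter. Every such $V$ equals $SV'$ with $V'=S^{-1}V\in C^1$, so no affine-jet argument is required.
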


\begin{proof}
Substituting $V = SV'$ into \eqref{eq:normalized} gives $S V'_x + N S V'_y + (S_x + N S_y + P S)\,V' = h$; left multiplication by $S^{-1}$ is the normalization, producing the displayed data. The eigencovector property survives, with the same eigenvalue: $\omega S\, (S^{-1} N S) = \omega N S = \lambda\, \omega S$; in particular $\lambda$ and $\mu$ are unchanged, as Lemma~\ref{lem:symbol} already showed at the level of the symbol. The connection covector transforms tensorially:
\[
\varrho'(\omega S) \;=\; (\omega S)_x + \lambda\, (\omega S)_y - \omega S\, S^{-1}\bigl(S_x + N S_y + P S\bigr).
\]
Expanding $(\omega S)_x = \omega_x S + \omega S_x$ and $(\omega S)_y = \omega_y S + \omega S_y$, the two occurrences of $\omega S_x$ cancel, the terms $\lambda\, \omega S_y - \omega N S_y$ vanish by the eigencovector relation, and what remains is
\[
\varrho'(\omega S) \;=\; \bigl(\omega_x + \lambda\, \omega_y - \omega P\bigr)\, S \;=\; \varrho(\omega)\, S .
\]
Finally $\Xi_{\omega S} = \Xi_\omega S$ (rows $\omega S$ and $\overline{\omega S} = \bar\omega S$, as $S$ is real), so
\[
\varrho'(\omega S)\, \Xi_{\omega S}^{-1} = \varrho(\omega)\, S\, S^{-1}\, \Xi_\omega^{-1} = \varrho(\omega)\, \Xi_\omega^{-1},
\qquad \omega' h' = \omega S S^{-1} h = \omega h,
\]
and the data \eqref{eq:E-data} coincide. The pairing relation $\omega S V' = \omega V$ is the definition of $V'$.
\end{proof}

\section{Two identifications}\label{sec:identifications}

The machinery of Sections~\ref{sec:eigenpairing}--\ref{sec:naturality} connects to the pipeline through two identifications: the pipeline output is the eigenpairing equation in the pipeline's normalization of the eigencovector, and the original BV equation is the eigenpairing equation of its own realification in the ambient normalization.

\begin{proposition}[The bundling is a spectral pairing]\label{prop:bundling}
In the skeleton frame, the covector
\[
\omega_0 \;:=\; \bigl(a_{22} - \lambda\, a_{12},\; \lambda\bigr)
\]
is a $\lambda$-eigencovector field of $N$, of class $C^1$, and its pairing with the unknown vector is the pipeline's complex unknown:
\[
\omega_0 \cdot (u, v) \;=\; a_{22}\,u + \lambda\,(v - a_{12}\,u) \;=\; U + \lambda V \;=\; w .
\]
\end{proposition}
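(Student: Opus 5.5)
We need to show three things: $\omega_0 N = \lambda\,\omega_0$, that $\omega_0$ is $C^1$ and nowhere vanishing, and the pairing identity. The pairing identity is immediate: expand $\omega_0\cdot(u,v)=(a_{22}-\lambda a_{12})u+\lambda v$ and regroup it as $a_{22}u+\lambda(v-a_{12}u)=U+\lambda V$, using the canonical substitution of Section~\ref{sec:setup}.

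\emph{Regularity and non-vanishing.} The second component of $\omega_0$ is $\lambda\neq 0$, since $\lambda\in\HH^+$, so $\omega_0$ vanishes nowhere. For regularity, $a_{12},a_{22}\in C^1$, and $\alpha,\beta$ are $C^1$ because $a_{11}>0$. The root $\lambda=\bigl(-\beta+i\sqrt{4\alpha-\beta^2}\bigr)/2$ is then $C^1$, because the square root is smooth on $(0,\infty)$ and ellipticity gives $4\alpha-\beta^2>0$.

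\emph{Eigencovector property.} Rather than inverting $\sigma(e_1)$, I would check the equivalent relation $\omega_0\,\sigma(e_2)=\lambda\,\omega_0\,\sigma(e_1)$. This is equivalent because $\omega_0 N=\lambda\omega_0$ holds if and only if $\omega_0\sigma(e_1)^{-1}\sigma(e_2)=\lambda\omega_0$, and $\sigma(e_1)$ is invertible with determinant $a_{11}>0$. I compute both sides componentwise.
\begin{itemize}
\item Left side: $\omega_0\sigma(e_2)=\bigl((a_{22}-\lambda a_{12})a_{12}+\lambda a_{22},\;-(a_{22}-\lambda a_{12})\bigr)$.
\item Right side: $\lambda\,\omega_0\sigma(e_1)=\lambda\bigl((a_{22}-\lambda a_{12})a_{11}+\lambda a_{21},\;\lambda\bigr)$.
\end{itemize}

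The second components do not match directly: we need $-a_{22}+\lambda a_{12}=\lambda^2$, which is false in general. So I would first recheck the eigencovector condition with the paper's convention $N=\sigma(e_1)^{-1}\sigma(e_2)$. One has $\omega N=\lambda\omega$ if and only if $\omega\sigma(e_1)^{-1}\sigma(e_2)=\lambda\omega$, and this is not the same as $\omega\sigma(e_2)=\lambda\omega\sigma(e_1)$. Setting $\eta:=\omega\sigma(e_1)^{-1}$, the condition becomes $\eta\,\sigma(e_2)=\lambda\,\eta\,\sigma(e_1)$. So the correct verification is to compute $\eta_0=\omega_0\sigma(e_1)^{-1}$ and check the pencil relation for $\eta_0$.

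We have $\sigma(e_1)^{-1}=\frac{1}{a_{11}}\begin{psmallmatrix}1&0\\-a_{21}&a_{11}\end{psmallmatrix}$. Hence
\[
\eta_0=\Bigl(\tfrac{a_{22}-\lambda a_{12}-\lambda a_{21}}{a_{11}},\;\lambda\Bigr).
\]
I would then verify $\eta_0\sigma(e_2)=\lambda\eta_0\sigma(e_1)$ component by component.

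\emph{Second component.} The left side is $-\frac{a_{22}-\lambda(a_{12}+a_{21})}{a_{11}}=-\alpha-\beta\lambda$. The right side is $\lambda^2$. These agree exactly by $p(\lambda)=0$.

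\emph{First component.} The left side is $\eta_{0,1}a_{12}+\lambda a_{22}$. The right side is $\lambda\bigl(\eta_{0,1}a_{11}+\lambda a_{21}\bigr)$. Substituting $\eta_{0,1}a_{11}=a_{22}-\lambda(a_{12}+a_{21})$, the right side becomes $\lambda a_{22}-\lambda^2 a_{12}$. The condition therefore reduces to $\eta_{0,1}=-\lambda^2/a_{11}\cdot\ldots$, and I would simply grind it out.

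Equivalently, since $\lambda$ is a simple eigenvalue by Lemma~\ref{lem:symbol}, the left-eigencovector line is one-dimensional. It therefore suffices to check one nontrivial component of $\omega_0(N-\lambda I)=0$, provided $N-\lambda I$ has rank one, which holds because $\lambda$ is a simple eigenvalue of a $2\times2$ matrix. I expect this bookkeeping to be the main obstacle: getting the transpose and left-versus-right conventions right, so that $p(\lambda)=0$ closes the computation.

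As a cross-check, Lemma~\ref{lem:residual} together with Lemma~\ref{lem:determinacy}-type reasoning forces the same conclusion. The residual identity expresses $\omega_0\cdot(V_x+NV_y)$-type principal terms as $w_x+\lambda w_y$. This is possible only if $\omega_0 N=\lambda\omega_0$, obtained by comparing the $V_y$ coefficients.
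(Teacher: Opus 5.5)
Your argument is essentially the paper's: a componentwise check of $\omega_0 N=\lambda\omega_0$ closed by $p(\lambda)=0$. You route it through $\eta_0=\omega_0\sigma(e_1)^{-1}=(\alpha+\beta\lambda,\lambda)=(-\lambda^2,\lambda)$, which is the same covector the paper uses in Proposition~\ref{prop:pipeline-id}, instead of writing out $N$. The proof goes through, but three lines need repair.

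First, the equivalence you assert at the start, $\omega_0N=\lambda\omega_0 \iff \omega_0\sigma(e_2)=\lambda\,\omega_0\sigma(e_1)$, is false, as you then notice. Drop it and keep only the exact identity $\omega_0(N-\lambda I)=\eta_0\bigl(\sigma(e_2)-\lambda\sigma(e_1)\bigr)$, whose two components are precisely your two pencil components.

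Second, your first-component reduction is garbled. After your (correct) substitution the condition is $a_{12}\,(\eta_{0,1}+\lambda^2)=0$, with no factor $1/a_{11}$. It holds because your second-component computation already gave $\eta_{0,1}=\alpha+\beta\lambda=-\lambda^2$. This is exactly the paper's identity $a_{12}(\alpha+\beta\lambda+\lambda^2)=0$.

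Third, your rank-one shortcut is valid, but only if ``nontrivial component'' means one whose column of $N-\lambda I$ is nonzero. Here the second column has first entry $-1/a_{11}\neq 0$, so your second-component check alone suffices.

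Your residual-identity cross-check also works, but it needs the $V_x$ comparison $\omega_0=(-\lambda^2,\lambda)\,\sigma(e_1)$ as well as the $V_y$ one.
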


\begin{proof}
From \eqref{eq:N-def},
\[
N = \frac{1}{a_{11}}\begin{pmatrix} a_{12} & -1 \\ a_{11}a_{22} - a_{12}a_{21} & a_{21} \end{pmatrix}.
\]
The second component of $\omega_0 N$ is $\bigl[-(a_{22} - \lambda a_{12}) + \lambda a_{21}\bigr]/a_{11} = -(\alpha + \beta\lambda) = \lambda^2$, using the structure polynomial $\lambda^2 + \beta\lambda + \alpha = 0$; the first component of $\omega_0 N - \lambda\,\omega_0$ equals $a_{12}\,\bigl(\alpha + \beta\lambda + \lambda^2\bigr) = 0$ by the same identity. Regularity is that of $a_{12}, a_{22}, \lambda \in C^1$, and $\omega_0$ is nowhere zero since its second component is $\lambda \in \HH^+$. The displayed pairing is the pipeline's canonical substitution $U = a_{22}u$, $V = v - a_{12}u$ followed by its spectral bundling $w = U + \lambda V$.
\end{proof}

\begin{proposition}[Pipeline identification]\label{prop:pipeline-id}
For every admissible system $L$ in skeleton form,
\[
\pi(L) \;=\; E_L(\omega_0).
\]
\end{proposition}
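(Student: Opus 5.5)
The plan is to compare residuals and let the Determinacy Lemma (Lemma~\ref{lem:determinacy}) finish the argument. Both $\pi(L)$ and $E_L(\omega_0)$ are equations of BV form with the same Beltrami coefficient $\mu = (\lambda - i)/(\lambda + i)$. For $\pi(L)$ this is the pipeline's definition. For $E_L(\omega_0)$ it is the definition in Lemma~\ref{lem:eigen-eq}, which applies because Proposition~\ref{prop:bundling} shows that $\omega_0$ is a $C^1$ nowhere-vanishing $\lambda$-eigencovector field of $N$. By Proposition~\ref{prop:bundling}, both equations are evaluated on the same complex unknown $w = \omega_0 \cdot (u,v) = U + \lambda V$. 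It therefore suffices to show that their residuals agree as affine functions of the first jet of $V = (u,v)$.

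By Lemma~\ref{lem:eigen-eq}(ii), the residual of $E_L(\omega_0)$ at $w = \omega_0 V$ is $(1-i\lambda)^{-1}\,\omega_0 \cdot (V_x + N V_y + P V - h)$. By \eqref{eq:normalized}--\eqref{eq:N-def}, the normalized residual vector satisfies $V_x + NV_y + PV - h = \sigma(e_1)^{-1}(R_1, R_2)^{\mathsf T}$, where $R_1, R_2$ are the skeleton residuals. This holds because the normalized system is obtained by left multiplication of the skeleton system by $\sigma(e_1)^{-1}$. The residual of $E_L(\omega_0)$ is therefore $(1-i\lambda)^{-1}\,\omega_0\,\sigma(e_1)^{-1}(R_1,R_2)^{\mathsf T}$.

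The step to compute is the row vector $\omega_0\,\sigma(e_1)^{-1}$. With $\sigma(e_1)^{-1} = a_{11}^{-1}\begin{psmallmatrix} 1 & 0 \\ -a_{21} & a_{11}\end{psmallmatrix}$, one gets
\[
\omega_0\,\sigma(e_1)^{-1} = \Bigl(\tfrac{a_{22} - \lambda a_{12} - \lambda a_{21}}{a_{11}},\; \lambda\Bigr) = (\alpha + \beta\lambda,\; \lambda) = (-\lambda^2,\; \lambda),
\]
using $\lambda^2 + \beta\lambda + \alpha = 0$. Hence the residual of $E_L(\omega_0)$ equals $(1-i\lambda)^{-1}\bigl[-\lambda^2 R_1 + \lambda R_2\bigr]$.

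Comparing with the residual identity of Lemma~\ref{lem:residual}, it remains to check the scalar identity $(1-i\lambda)^{-1} = \tfrac12(1-\mu)$. From $\mu = (\lambda-i)/(\lambda+i)$ we have $1 - \mu = 2i/(\lambda + i)$, so $\tfrac12(1-\mu) = i/(\lambda+i) = 1/(1 - i\lambda)$. The two residuals thus coincide identically in the first jet. Lemma~\ref{lem:determinacy}, applied with the single eigencovector field $\omega_0$, then gives $\pi(L) = E_L(\omega_0)$.

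I expect no real obstacle. The only delicate points are bookkeeping: keeping the sign conventions of $\sigma(e_1)$ and the skeleton residuals consistent, and confirming that the residual identity is stated for the same $w = U + \lambda V$ and not for a rescaled unknown.
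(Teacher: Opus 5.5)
Your proposal is correct and follows the paper's proof essentially step for step. You match the residual identity \eqref{eq:residual-identity} with the eigenpairing residual \eqref{eq:E-residual} via the two pointwise identities $\omega_0\,\sigma(e_1)^{-1} = (-\lambda^2,\lambda)$ and $\tfrac12(1-\mu) = (1-i\lambda)^{-1}$, and you close with Lemma~\ref{lem:determinacy}.
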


\begin{proof}
By Proposition~\ref{prop:bundling} the pipeline's unknown is $w = \omega_0 V$, and the residual identity \eqref{eq:residual-identity} states, identically in the first jet of $V$,
\[
\mathrm{Res}_{\pi(L)}(\omega_0 V) \;=\; \tfrac12(1-\mu)\,(-\lambda^2,\; \lambda)\cdot R,
\]
where $R = (R_1, R_2)$ are the skeleton residuals. Two pointwise algebraic identities convert the right-hand side into the eigenpairing residual \eqref{eq:E-residual}. The prefactors agree: $\mu = (\lambda - i)/(\lambda + i)$ gives
\[
\tfrac12(1 - \mu) \;=\; \frac{i}{\lambda + i} \;=\; \frac{1}{1 - i\lambda} .
\]
The covectors agree: the normalized residual of $L$ is $\sigma(e_1)^{-1} R$, and
\[
\omega_0\, \sigma(e_1)^{-1} \;=\; \Bigl( \frac{a_{22} - \lambda(a_{12} + a_{21})}{a_{11}},\; \lambda \Bigr) \;=\; \bigl(\alpha + \beta\lambda,\; \lambda\bigr) \;=\; \bigl(-\lambda^2,\; \lambda\bigr)
\]
by the structure polynomial. Hence
\[
\mathrm{Res}_{\pi(L)}(\omega_0 V) \;=\; \frac{1}{1 - i\lambda}\;\omega_0 \cdot \bigl(V_x + N V_y + P V - h\bigr) \;=\; \mathrm{Res}_{E(\omega_0)}(\omega_0 V)
\]
identically in the jet of $V$. Both equations are of BV form with the same Beltrami coefficient --- the pipeline's $\mu$ is the Cayley image of the same $\lambda$ --- so Lemma~\ref{lem:determinacy} gives $\pi(L) = E_L(\omega_0)$.
\end{proof}

\begin{lemma}[Realification]\label{lem:realification}
Let $D = (\mu, \Acal, \Bcal, \Fcal) \in \BV(\Omega)$ and let $L_0$ be its realification: the pair of real equations $(R_1, R_2) = (\operatorname{Re}, \operatorname{Im})\,\mathrm{Res}_D$ in the unknowns $(u, v)$, $w = u + iv$. Then $L_0$ is elliptic, its normalized form has
\[
N_0 \;=\; [\lambda], \qquad \lambda \;=\; i\,\frac{1 + \mu}{1 - \mu} \;\in\; \HH^+
\]
--- the canonical endomorphism of a realified BV equation is multiplication by its own spectral root --- the ambient covector $e = (1, i)$ is a $\lambda$-eigencovector of $N_0$, and
\[
E_{L_0}(e) \;=\; D .
\]
\end{lemma}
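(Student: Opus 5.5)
The plan is to compute the normalized presentation of $L_0$ directly from the residual of $D$, and then to apply Lemma~\ref{lem:determinacy} to identify $E_{L_0}(e)$ with $D$.

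\textbf{Step 1: the normalized form.} Write $\mathrm{Res}_D(w) = w_{\bar z} - \mu w_z + \Acal w + \Bcal\bar w - \Fcal$. The operator identity of Lemma~\ref{lem:eigen-eq}, with $\lambda := i(1+\mu)/(1-\mu)$, gives
\[
(1-i\lambda)\bigl(w_{\bar z} - \mu w_z\bigr) = w_x + \lambda w_y .
\]
This inverts the Cayley map: $\mu = (\lambda-i)/(\lambda+i)$, and $|\mu|<1$ gives $\lambda \in \HH^+$. Since $1-i\lambda = 2/(1-\mu) \neq 0$, we get
\[
(1-i\lambda)\,\mathrm{Res}_D(u+iv) = (u+iv)_x + \lambda(u+iv)_y + \text{lower order}.
\]
In real form, $e\cdot V_y$ multiplied by $\lambda$ equals $e\,[\lambda] V_y$, because $e[c]=ce$. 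So the real system
\[
V_x + [\lambda] V_y + P_0 V = h_0
\]
has a residual whose complex combination $e\cdot(\cdot)$ equals $(1-i\lambda)\mathrm{Res}_D$. Here $P_0, h_0$ are the real matrices determined by
\[
e P_0 V = (1-i\lambda)(\Acal w + \Bcal\bar w), \qquad e\cdot h_0 = (1-i\lambda)\Fcal .
\]
These are well defined because $V \mapsto e\cdot V$ is an $\R$-isomorphism $\R^2\to\C$.

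This real system is obtained from $(\operatorname{Re},\operatorname{Im})\mathrm{Res}_D$ by left multiplication by the invertible real matrix $[1-i\lambda]$, which is nonzero. By the (R)-invariance of the normalized presentation, $N_0 = [\lambda]$. Its eigenvalues $\lambda$ and $\bar\lambda$ are nonreal, so $L_0$ is elliptic by Lemma~\ref{lem:symbol}. One caveat: the skeleton form requires an invertible $\partial_x$-coefficient. Here that coefficient is $[1]$ after normalization, so I would also check that $L_0$ is admissible, i.e.\ that skeletonization yields $a_{11}>0$. This should follow from $\det\sigma(e_1)>0$ up to an orientation choice. This is the only point where I expect some care, and I would verify it by writing $\sigma(e_1)$ explicitly.

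\textbf{Step 2: the eigencovector.} We have $e N_0 = e[\lambda] = \lambda e$, so $e$ is a constant (hence $C^1$), nowhere-vanishing $\lambda$-eigencovector.

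\textbf{Step 3: conclusion.} By Lemma~\ref{lem:eigen-eq}, with $w = e\cdot V = u+iv$,
\[
\mathrm{Res}_{E_{L_0}(e)}(w) = \frac{1}{1-i\lambda}\, e\cdot\bigl(V_x + N_0 V_y + P_0 V - h_0\bigr) = \mathrm{Res}_D(w),
\]
identically in the first jet of $V$, by the construction in Step 1. Both equations have Beltrami coefficient $\mu$, since the Cayley image of $\lambda$ returns $\mu$. Lemma~\ref{lem:determinacy} therefore gives $E_{L_0}(e) = D$.

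If one prefers to avoid determinacy, the data can also be checked directly. Since $e$ is constant, $\varrho(e) = -eP_0$, and $\Xi_e^{-1}$ converts $eP_0V$ back into the $(w,\bar w)$-coefficients $(1-i\lambda)(\Acal,\Bcal)$. Then \eqref{eq:E-data} returns exactly $(\Acal,\Bcal,\Fcal)$.
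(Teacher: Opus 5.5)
Your proof is correct and follows the paper's route. Multiplying by $[1-i\lambda]=[c_x]^{-1}$ is exactly the paper's normalization $N_0=[c_x]^{-1}[c_y]=[\lambda]$, after which $e[\lambda]=\lambda e$, the residual formula \eqref{eq:E-residual} and Lemma~\ref{lem:determinacy} finish the argument as in the paper; your direct check of the data via $\varrho(e)=-eP_0$ is a valid alternative to determinacy. The admissibility caveat is not needed, since the lemma asserts only ellipticity and Lemmas~\ref{lem:eigen-eq} and~\ref{lem:determinacy} require nothing more. Likewise Lemma~\ref{lem:symbol} need not be invoked: $\det\sigma(\xi)=|c_x|^2|\xi_1+\lambda\xi_2|^2$ is nonvanishing for $\xi\neq0$ directly because $\lambda\notin\R$.
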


\begin{proof}
The principal part of $\mathrm{Res}_D$ is $w_{\bar z} - \mu w_z = c_x\, w_x + c_y\, w_y$ with $c_x = \tfrac12(1 - \mu)$, $c_y = \tfrac{i}{2}(1 + \mu)$, and $w_x = e\, V_x$, $w_y = e\, V_y$. Splitting into real and imaginary parts, the coefficient matrices of $V_x$ and $V_y$ in $(R_1, R_2)$ are $[c_x]$ and $[c_y]$; $c_x \neq 0$ since $|\mu| < 1$, and the normalization gives
\[
N_0 \;=\; [c_x]^{-1}[c_y] \;=\; [c_y/c_x] \;=\; [\lambda],
\]
since $c_y/c_x = i(1+\mu)/(1-\mu) = \lambda$. As $\lambda \notin \R$, the system is elliptic, and $e\, N_0 = e\,[\lambda] = \lambda\, e$: the ambient bundling $w = u + iv$ is the spectral pairing of $N_0$ in the normalization $e$. For the identification, the normalized residual of $L_0$ is $[c_x]^{-1}(R_1, R_2)$, and
\[
\mathrm{Res}_{E(e)}(e\, V) \;=\; \frac{1}{1 - i\lambda}\; e\,[c_x]^{-1}\,(R_1, R_2) \;=\; \frac{c_x^{-1}}{1 - i\lambda}\,\bigl(R_1 + i R_2\bigr) \;=\; \frac{c_x^{-1}}{1 - i\lambda}\;\mathrm{Res}_D(w),
\]
using $e\,[c_x]^{-1} = c_x^{-1} e$ and $e \cdot (R_1, R_2) = R_1 + i R_2$. The prefactor is exactly $1$: $(1 - i\lambda)^{-1} = \tfrac12(1 - \mu) = c_x$, as in Proposition~\ref{prop:pipeline-id}. So the residuals of $E_{L_0}(e)$ and of $D$ agree identically on the pairing, both equations are of BV form with the same $\mu$, and Lemma~\ref{lem:determinacy} concludes.
\end{proof}

\section{Proof of the Absorption Theorem}\label{sec:proof}

\begin{proof}[Proof of Theorem~\ref{thm:main}]
Realification and substitution commute: substituting $w = \varphi v' + \psi \bar v'$ into the equation of $D$ and separating real and imaginary parts is the same as separating first and then substituting $V = S V'$, where $S = [\varphi] + [\psi]\begin{psmallmatrix} 1 & 0 \\ 0 & -1 \end{psmallmatrix}$ is the real matrix of $z \mapsto \varphi z + \psi \bar z$, with $\det S = |\varphi|^2 - |\psi|^2 > 0$ and $S \in C^1$. So, in the notation of Lemmas~\ref{lem:naturality} and~\ref{lem:realification}, $S^\sharp D$ is the system $L_0$ transformed by $V = SV'$. It is admissible: its determinant conic is that of $L_0$ scaled by the positive factor $\det S$ (Lemma~\ref{lem:symbol}), so the pipeline applies to its skeletonization, and skeletonization is an (R)-move, under which the normalized form \eqref{eq:normalized} --- hence every eigenpairing equation --- is unchanged.

Now assemble the identifications. First, by Proposition~\ref{prop:pipeline-id} applied to the skeletonization of $S^\sharp D$,
\[
D_S \;=\; \pi\bigl(S^\sharp D\bigr) \;=\; E_{S^\sharp D}\bigl(\omega_0'\bigr),
\]
where $\omega_0'$ is the pipeline normalization of Proposition~\ref{prop:bundling} for the substituted skeleton, with second component $(\omega_0')_2 = \lambda$ --- the same function $\lambda$, by Lemma~\ref{lem:symbol}. Second, by Lemma~\ref{lem:realification} and Lemma~\ref{lem:naturality},
\[
D \;=\; E_{L_0}(e) \;=\; E_{S^\sharp D}\bigl(e\, S\bigr), \qquad e\, S \;=\; \bigl(\varphi + \psi,\; i(\varphi - \psi)\bigr),
\]
the ambient covector computed from $e\,[\varphi] = \varphi\, e$ and $e\,[\psi]\begin{psmallmatrix} 1 & 0 \\ 0 & -1 \end{psmallmatrix} = \psi\,(1, -i)$. Third, $e S$ and $\omega_0'$ are both $\lambda$-eigencovectors of the canonical endomorphism $S^{-1} N_0 S$ of the substituted system, and $\lambda$ is a simple eigenvalue by ellipticity (Lemma~\ref{lem:symbol}), so they are pointwise proportional,
\[
e\, S \;=\; \tilde\phi\; \omega_0', \qquad \tilde\phi \;=\; \frac{(e S)_2}{(\omega_0')_2} \;=\; \frac{i(\varphi - \psi)}{\lambda},
\]
read off the second components; $\tilde\phi$ is $C^1$ and nowhere zero, since $\varphi = \psi$ at a point would force $\det S = 0$ there. Finally, by Lemma~\ref{lem:gauge-cov},
\[
D \;=\; E_{S^\sharp D}(e S) \;=\; E_{S^\sharp D}\bigl(\tilde\phi\, \omega_0'\bigr) \;=\; \tilde\phi \cdot E_{S^\sharp D}(\omega_0') \;=\; \tilde\phi \cdot D_S,
\]
hence $D_S = \tilde\phi^{-1} \cdot D$ with
\[
\tilde\phi^{-1} \;=\; \frac{\lambda}{i(\varphi - \psi)} \;=\; \frac{-i\lambda}{\varphi - \psi} \;=\; \tilde\varphi .
\]
The pointwise consequences $\mu_S = \mu$, $|\Bcal_S| = |\Bcal|$, $\Theta_{D_S} = \Theta_D$ are the gauge laws \eqref{eq:gauge-action} for $\tilde\varphi$.
\end{proof}

\begin{remark}[Universality of the gauge]\label{rem:universal-gauge}
The gauge depends on the substitution and, through $\lambda$, on the Beltrami coefficient of $D$, and on nothing else --- not on $\Acal$, $\Bcal$, $\Fcal$, nor on any derivative of the data. It is the ratio of the two eigencovector normalizations in play: the ambient one transported by the substitution and the spectral one selected by the pipeline. Three special cases deserve note. For the identity substitution $(\varphi, \psi) = (1, 0)$ the gauge is $-i\lambda$, generically non-constant: even re-running the pipeline on the untouched realification of $D$ returns not $D$ itself but its gauge image $(-i\lambda) \cdot D$ --- the pipeline's spectral normalization $(\omega_0)_2 = \lambda$ differs from the ambient normalization $e_2 = i$, and the normal form remembers this. And for $\C$-linear substitutions $\psi \equiv 0$ the gauge is $-i\lambda/\varphi$: the classical gauge $\varphi^{-1}$ composed with the self-consistency gauge $-i\lambda$.
\end{remark}

\begin{corollary}[Invariants of the real system]\label{cor:invariants}
Every gauge--diffeomorphism invariant of BV data is an invariant of the underlying real elliptic system under the full symmetry groupoid $\Scal$ of Definition~\ref{def:groupoid}. In particular the density $\Theta$, the pseudo-analytic mass $\Mcal$, the zero locus and dichotomy of $\Bcal$, the analytic class $\Bcal \equiv 0$, are invariants of the system: (R) acts trivially on the normal form, (U) acts through the gauge \eqref{eq:universal-gauge}, and (D) acts by the covariance of \cite[\S 5]{mass}.
\end{corollary}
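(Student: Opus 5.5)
The plan is to treat the three generators of $\Scal$ separately and show that each acts on the pipeline output through the gauge--diffeomorphism group. Any gauge--diffeomorphism invariant is then automatically an invariant of the system. Since $\Scal$ is generated by (R), (U), (D), it suffices to check single generators: a word in them acts through a composite of gauges and diffeomorphism pullbacks, and invariance is preserved under composition.

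For (R), I would use the observation of Section~\ref{sec:eigenpairing}. The normalized presentation $(N,P,h)$ depends only on the (R)-class, because $M$ cancels against $\sigma(e_1)^{-1}$. By Proposition~\ref{prop:pipeline-id}, $\pi(L)=E_L(\omega_0)$, and here $\omega_0=(a_{22}-\lambda a_{12},\lambda)$ is built from the skeleton coefficients. Re-skeletonizing $ML$ may in principle yield a different skeleton presentation, so I would argue as follows. Both outputs are eigenpairing equations of the same normalized system. Their eigencovectors are sections of the same simple $\lambda$-eigenline, which is simple by Lemma~\ref{lem:symbol}. Lemma~\ref{lem:gauge-cov} then shows that they differ by a gauge, and in fact the skeleton form is unique, so the gauge is trivial. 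Either way, (R) acts inside the gauge orbit.

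For (U) on an arbitrary admissible system $L$ (not only a realification), I would repeat the argument of the proof of Theorem~\ref{thm:main} with $\omega_0$ in place of $e$. Lemma~\ref{lem:naturality} gives $E_L(\omega_0)=E_{L'}(\omega_0 S)$. The pipeline output of $L'$ is $E_{L'}(\omega_0')$, where $\omega_0'$ is a $\lambda$-eigencovector of $S^{-1}NS$. Simplicity of $\lambda$ gives $\omega_0 S=\tilde\phi\,\omega_0'$ with $\tilde\phi\in C^1(\Omega;\C^*)$, and Lemma~\ref{lem:gauge-cov} yields $\pi(L')=\tilde\phi^{-1}\cdot\pi(L)$. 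The \emph{main obstacle} is checking that $\tilde\phi$ is $C^1$ and nowhere vanishing. It is the ratio of second components, $(\omega_0 S)_2/\lambda$, which is $C^1$ since $S,\omega_0\in C^1$. It is nonzero because $\omega_0 S$ is nowhere zero ($S$ is invertible) and lies in the eigenline, and a nonzero eigencovector with vanishing second component would make $\omega_0'$ and it dependent with the first component forced; I would verify this directly from the explicit form of $N$. Admissibility of $L'$ follows from Lemma~\ref{lem:symbol}, since $\det S>0$ preserves the sign of the conic.

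For (D), I would simply cite the covariance of \cite[\S 5]{mass}: the pipeline commutes with orientation-preserving diffeomorphisms up to the BV pullback, and $\Theta$ transforms as a 2-form. Combining the three cases, $\Theta$, $\Mcal$, the zero locus of $\Bcal$, and the class $\Bcal\equiv0$ depend only on $|\Bcal|$ and $\mu$ (up to pullback). They are therefore unchanged by gauges, since $|\Bcal\phi/\bar\phi|=|\Bcal|$ and $\mu'=\mu$, and hence are invariants of the system under $\Scal$.
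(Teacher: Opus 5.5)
Your proof is correct. For (R) and (D) it follows the paper, whose proof is simply: (R) is absorbed by the normalization $(N,P,h)$, (U) is Theorem~\ref{thm:main}, and (D) is \cite{mass}. Your uniqueness-of-skeleton remark amounts to the same thing as the paper's point, since $\omega_0$ is determined by $N$ alone as the $\lambda$-eigencovector with second component $\lambda$.

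The real difference is in (U). The paper cites Theorem~\ref{thm:main}, which is stated only for substituted realifications $S^\sharp D$. You instead re-run its assembly on an arbitrary admissible $L$, with the pipeline covector $\omega_0$ in place of the ambient covector $e$, and obtain $\pi(L')=\tilde\phi^{-1}\cdot\pi(L)$ with $\tilde\phi=(\omega_0S)_2/\lambda$.

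Your route buys directness. To cover a general system through the theorem, the paper implicitly uses that $L$ is (R)+(U)-equivalent to the realification of $\pi(L)$. This goes via $V_0=TV$, where $T$ is the real matrix with rows $\operatorname{Re}\omega_0$ and $\operatorname{Im}\omega_0$, so $\det T=a_{22}\operatorname{Im}\lambda>0$, and the residual identity supplies the row recombination. You bypass that step entirely.

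The paper's route buys the universal closed form $-i\lambda/(\varphi-\psi)$, which is what the statement literally means by ``(U) acts through the gauge \eqref{eq:universal-gauge}''. Your gauge depends on the skeleton coefficients of $L$. It recovers that formula when $L$ is a realification: there $\omega_0=-i\lambda\,e$, so $\tilde\phi=\varphi-\psi$, composed with the self-consistency gauge $-i\lambda$ of Remark~\ref{rem:universal-gauge}.

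Finally, what you call the main obstacle is immediate. $\omega_0S$ is nowhere zero and equals $\tilde\phi\,\omega_0'$, so $\tilde\phi$ cannot vanish. No appeal to the explicit form of $N$ is needed.
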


\begin{proof}
(R) is absorbed by the normalization \eqref{eq:normalized} as observed in Section~\ref{sec:eigenpairing}; (U) is Theorem~\ref{thm:main}; (D) is \cite{mass}. The listed quantities are gauge--diffeomorphism invariant by \cite{mass}.
\end{proof}

\begin{remark}[Mass invariance, explicitly]\label{rem:explicit-mass}
For the pseudo-analytic mass the invariance under (U) admits a fully explicit form, and a verification independent of this paper's machinery. Theorem~\ref{thm:main} and the gauge laws give the pointwise density identity
\[
\Theta_{D_S} \;=\; \frac{|\Bcal_S|^2}{1 - |\mu_S|^2}\,dx\,dy \;=\; \Theta_D,
\qquad
\Bcal_S = \frac{\tilde\varphi}{\overline{\tilde\varphi}}\,\Bcal
= -\frac{\lambda\,(\bar\varphi - \bar\psi)}{\bar\lambda\,(\varphi - \psi)}\,\Bcal,
\quad \mu_S = \mu,
\]
with the $\Bcal$-factor unimodular, so $\Mcal(D_S) = \Mcal(D)$ follows by integration with no change of variables. The identity has moreover been certified through the \emph{literal pipeline formulas} of \cite[\S 3]{mass} alone: running the printed seven steps on the substituted system, one finds $\alpha_S = |\lambda|^2$ and $\beta_S = -2\operatorname{Re}\lambda$ exactly --- the structure data, and with them the $\Theta$-denominator, are equal before any further computation --- while the displayed transformation of $\Bcal_S$, and $\Fcal_S = \tilde\varphi\, \Fcal$, hold as exact rational identities in the data and their first jets. The pseudo-analytic mass of a real elliptic system can thus be confirmed frame-independent by direct computation, without uniformization and without the eigenpairing construction.
\end{remark}

\begin{corollary}[Orientation-reversing substitutions]\label{cor:reversal}
Let $|\psi| > |\varphi|$ pointwise, so $\det S < 0$. The substituted system is elliptic with its determinant conic negated; its skeleton form has $a_{11} < 0$, outside the admissible class, but the structure data still satisfy $4\alpha - \beta^2 > 0$ and the pipeline formulas extend verbatim with the upper-half-plane root $\lambda \in \HH^+$. With $\pi$ so extended --- equivalently, with $D_S := E_{S^\sharp D}(\omega_0')$ --- the conclusion of Theorem~\ref{thm:main} holds unchanged:
\[
D_S \;=\; \frac{-i\lambda}{\varphi - \psi} \cdot D .
\]
In particular $\mu_S = \mu$ (not $\bar\mu$), $|\Bcal_S| = |\Bcal|$, and $\Theta$ is preserved exactly: the symmetry groupoid of Corollary~\ref{cor:invariants} extends to $\mathrm{GL}(2,\R)$-valued substitutions with nowhere-vanishing determinant.
\end{corollary}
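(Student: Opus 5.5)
The plan is to rerun the proof of Theorem~\ref{thm:main} word for word, checking that no step used $\det S>0$ or $a_{11}>0$ except to enter the admissible class. The eigenpairing machinery never needed orientation. Lemma~\ref{lem:naturality} is stated for $S\in C^1(\Omega;\mathrm{GL}(2,\R))$ of either orientation. Lemmas~\ref{lem:eigen-eq}, \ref{lem:determinacy} and \ref{lem:gauge-cov} use only that $\lambda$ is a non-real simple eigenvalue of $N$.

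First I settle the spectral data. By Lemma~\ref{lem:realification} the realification $L_0$ has $N_0=[\lambda]$. After $V=SV'$, Lemma~\ref{lem:naturality} gives $N'=S^{-1}[\lambda]S$, whose characteristic polynomial is still $s^2-2\operatorname{Re}\lambda\, s+|\lambda|^2$ with eigenvalues $\lambda,\bar\lambda$. The skeleton form requires the $\partial_x$-coefficient matrix $[c_x]S$ to be invertible, which holds. Since $N'$ depends only on the (R)-class, the structure data read off from $\det(N'-sI)$ are $\alpha_S=|\lambda|^2$ and $\beta_S=-2\operatorname{Re}\lambda$. Hence $4\alpha_S-\beta_S^2=4(\operatorname{Im}\lambda)^2>0$, even though the conic is multiplied by $\det S<0$ (Lemma~\ref{lem:symbol}), which makes $a_{11}<0$.

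The one point needing care is which root the extended pipeline picks, so I fix it explicitly. The formula $\lambda=(-\beta+i\sqrt{4\alpha-\beta^2})/2$ is the upper-half-plane root and returns the same $\lambda$, not $\bar\lambda$, because $(\alpha,\beta)$ are unchanged. Thus $\mu_S=\mu$. This is the sense in which the pipeline ``extends verbatim'', and I take $D_S:=E_{S^\sharp D}(\omega_0')$ as the definition. Proposition~\ref{prop:bundling} is pure algebra using $\lambda^2+\beta\lambda+\alpha=0$ and $a_{11}\neq0$. So $\omega_0'=(a_{22}-\lambda a_{12},\lambda)$ is again a $C^1$ nowhere-vanishing $\lambda$-eigencovector of $N'$, with second component $\lambda$. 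If one prefers $\pi$ defined by the printed steps, Proposition~\ref{prop:pipeline-id} goes through unchanged, since the residual identity of Lemma~\ref{lem:residual} is algebraic and never used the sign of $a_{11}$.

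Then I assemble as in Section~\ref{sec:proof}. Lemmas~\ref{lem:realification} and \ref{lem:naturality} give $D=E_{L_0}(e)=E_{S^\sharp D}(eS)$ with $eS=(\varphi+\psi,\,i(\varphi-\psi))$. Simplicity of $\lambda$ gives $eS=\tilde\phi\,\omega_0'$ with $\tilde\phi=i(\varphi-\psi)/\lambda$. This is nowhere zero, since $\varphi=\psi$ would force $\det S=|\varphi|^2-|\psi|^2=0$, and it is $C^1$. Lemma~\ref{lem:gauge-cov} then yields $D_S=\tilde\phi^{-1}\cdot D$, which is the displayed gauge. The consequences $\mu_S=\mu$, $|\Bcal_S|=|\Bcal|$ and invariance of $\Theta$ follow from \eqref{eq:gauge-action}. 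The extension of the groupoid follows by combining this with Corollary~\ref{cor:invariants}.
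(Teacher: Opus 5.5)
Your proposal is correct and follows the paper's own proof. You rerun the assembly of Theorem~\ref{thm:main}, checking that only $a_{11}\neq 0$, the choice of the $\HH^+$ root, and the simplicity of $\lambda$ are used, and that $(\alpha,\beta)$ are unchanged. You read this off the characteristic polynomial of $S^{-1}[\lambda]S$, while the paper reads it off the scaling of the conic by $\det S$; these are the two halves of Lemma~\ref{lem:symbol}.

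One small caveat, which the paper's wording shares: $a_{11}<0$ does not follow from the conic scaling by $\det S$ alone, because the skeletonizing (R)-move rescales the conic by its own determinant. A direct computation shows that determinant is positive and gives $a_{11}=\det S/\bigl(|\varphi-\psi|^2\operatorname{Im}\lambda\bigr)<0$, and in any case nothing in the argument uses this sign.
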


\begin{proof}
The conic is scaled by $\det S < 0$ (Lemma~\ref{lem:symbol}), which negates $a_{11}$ but leaves the ratios $(\alpha, \beta)$, hence $\lambda$ and the discriminant condition, untouched; the skeleton form, structure data, and the eigencovector $\omega_0'$ of Proposition~\ref{prop:bundling} are therefore still defined, and the extended $\pi$ equals $E(\omega_0')$ by the proof of Proposition~\ref{prop:pipeline-id}, which nowhere used the sign of $a_{11}$. Lemmas~\ref{lem:eigen-eq}--\ref{lem:realification} and the proof of Theorem~\ref{thm:main} are insensitive to the sign of $\det S$ and proportionality of eigencovectors with $(\omega_0')_2 = \lambda$, $(eS)_2 = i(\varphi - \psi) \neq 0$ is unchanged, so the identical assembly goes through.
\end{proof}

\begin{remark}\label{rem:reversal-meaning}
The corollary is sharper than one might first expect. At the level of the form, a substitution with $\psi$ dominant composes the equation with the conjugation $w \mapsto \bar w$, and one might predict $\mu_S = \bar\mu$ --- the conjugate conformal structure. Re-normalization re-selects the $\HH^+$ root of the structure polynomial and undoes the conjugation: the entire invariant theory, not only $|\mu|$, survives orientation reversal of the unknown frame.
\end{remark}

\section{Concluding remarks}\label{sec:concluding}

The proof re-founds the pipeline of \cite{mass} as the coordinate expression of a frame-independent spectral construction. The principal symbol defines the endomorphism $N$ before any normalization is performed; its $\lambda$-eigencovector line is the invariant object; the eigenpairing equation $E(\omega)$ is the value of the construction at a section $\omega$ of that line; and the two normalizations that occur in practice --- the pipeline's spectral section $\omega_0$, with $(\omega_0)_2 = \lambda$, and the ambient section $e = (1, i)$ carried by the complex notation itself --- differ by the universal gauge \eqref{eq:universal-gauge}. The gauge group of \cite{mass} thus appears a priori, as the normalization ambiguity of an eigencovector, and the absorption theorem is the statement that the construction never chose a real frame to begin with.

Three threads continue from here. First, the connection covector $\varrho(\omega) = \omega_x + \lambda\omega_y - \omega P$ of \eqref{eq:E-data} is the intrinsic form of the pipeline's derivative terms, and Lemma~\ref{lem:naturality} says it transforms tensorially along the eigencovector line; pairing the unknowns with a covector \emph{off} the eigencovector line instead produces a \emph{framed} Beltrami--Vekua class, where the corresponding connection term is the $L$-Wronskian of the frame and the pseudo-analytic mass admits a closed substitution-invariant extension. The present theorem identifies the pipeline with the algebraic frame-straightening of that class, settling the reformulation proposed there. Second, the spectral picture suggests degree-theoretic readings of the remaining invariants --- the divisor of $\Bcal$ as data of the eigenpairing, and the holonomy of $\Acal$ on multiply connected domains as the monodromy of the eigencovector normalization; with Corollary~\ref{cor:invariants} these are now well-defined questions about real elliptic systems. Third, Corollary~\ref{cor:reversal} extends the symmetry groupoid to $\mathrm{GL}(2,\R)$, and it is natural to ask for the corresponding extension on the diffeomorphism side --- orientation-reversing coordinate changes --- and for the joint classification.

\subsection*{Use of Generative AI Tools}
\medskip

The author discloses the use of Anthropic's Claude (Claude Fable 5, accessed
through the Claude.ai web interface in June 2026) in the preparation of this
manuscript. The tool was used as follows:

\begin{enumerate}
\item[(i)] \emph{Exploratory dialogue.} The proof strategy --- the eigenpairing
equation, its naturality, and the identification of the pipeline with the
spectral pairing --- emerged in a collaborative research session, building on
the conjecture, partial results, and numerical protocols which were developed in earlier sessions. The universality of the gauge
\eqref{eq:universal-gauge} and the orientation-reversing extension were
discovered in the same dialogue.

\item[(ii)] \emph{Symbolic verification.} All lemmas of this paper, and the
closed form of the gauge, were verified by exact rational computer algebra on
generic variable-coefficient instances and generic variable substitutions of
both orientation classes, with scripts produced with the tool's assistance and
re-run by the author; the verification script accompanies this paper as an
ancillary file.

\item[(iii)] \emph{Drafting and revision of prose.} The manuscript was drafted
in iterative dialogue; all claims and their precise wording were reviewed by
the author.
\end{enumerate}

All mathematical statements are presented with complete proofs that the author
has verified independently. The author takes full responsibility for the
accuracy, originality, and integrity of all content.

\subsection*{Disclosure of interest}

The author reports there are no competing interests to declare.

\bibliographystyle{plain}

\end{document}